\documentclass{article}
\usepackage{ulem}

\usepackage{amssymb,amsmath,graphicx,amsthm,mathrsfs,bm}
\usepackage[textsize=footnotesize,color=DarkGreen!40]{todonotes}

\usepackage[colorlinks=true]{hyperref}
\usepackage{pdfsync,color}
\numberwithin{equation}{section}
\topmargin 0pt \textheight 20 true cm \textwidth 16  true cm
\oddsidemargin 0pt \evensidemargin 0pt

\newtheorem{theorem}{Theorem}[section]

\newtheorem{Lemma}[theorem]{Lemma}

\theoremstyle{definition}

\newtheorem{corollary}[theorem]{Corollary}
\theoremstyle{definition}
\newtheorem{remark}[theorem]{Remark}
\numberwithin{equation}{section}


\newcommand{\lc}
{\mathrel{\raise2pt\hbox{${\mathop<\limits_{\raise1pt\hbox
{\mbox{$\sim$}}}}$}}}

\newcommand{\gc}
{\mathrel{\raise2pt\hbox{${\mathop>\limits_{\raise1pt\hbox{\mbox{$\sim$}}}}$}}}

\newcommand{\ec}
{\mathrel{\raise2pt\hbox{${\mathop=\limits_{\raise1pt\hbox{\mbox{$\sim$}}}}$}}}

\def\bb{\begin{equation}} \def\ee{\end{equation}}

\def\beqn{\begin{eqnarray}}  \def\eqn{\end{eqnarray}}

\def\beqnx{\begin{eqnarray*}} \def\eqnx{\end{eqnarray*}}

\def\bn{\begin{enumerate}} \def\en{\end{enumerate}}

\def\bd{\begin{description}} \def\ed{\end{description}}

\def\label{\label}

\renewcommand{\leq}{\leqslant}
\renewcommand{\geq}{\geqslant}

\title{Observability inequalities for the heat equation with bounded potentials on the whole space}

\author{Yueliang Duan, \thanks{School of
Mathematics and Statistics,
Wuhan University, Wuhan 430072, China;
e-mail: duanyl@csu.edu.cn.}\quad
Lijuan Wang, \thanks{School of
Mathematics and Statistics, Computational Science Hubei Key Laboratory,
Wuhan University, Wuhan 430072, China;
e-mail: ljwang.math@whu.edu.cn.}
\quad Can Zhang
\thanks{Corresponding author. School of Mathematics and Statistics, Computational Science Hubei Key Laboratory,
Wuhan University, Wuhan 430072, China;
e-mail: canzhang@whu.edu.cn.}}

\begin{document}

\date{}

\maketitle

\begin{abstract}
In this paper we establish an observability inequality for the heat equation with bounded potentials on the whole space.  Roughly speaking, such a kind of inequality says that the total energy of solutions can be controlled by the energy localized in a subdomain,
which is  equidistributed over the whole space.
The proof of this inequality is mainly adapted from the parabolic frequency function method, which plays an important role in proving the unique continuation property for solutions of parabolic equations.  As an immediate application, we show that the null controllability holds for the heat equation with bounded potentials
on the whole space.
\end{abstract}

\medskip

\noindent\textbf{2010 Mathematics Subject Classifications.}
35K05, 93B07, 93C20
\medskip

\noindent\textbf{Keywords.}
Observability inequality, heat equation, bounded potential, whole space

\section{Introduction and main result}
Let $N$ be a positive integer and let $T$ be a positive time. Consider the following  heat equation with a time and space dependent potential
\begin{equation}\label{1.1}
\left\{ \begin{array}{lll}
\partial_{t}\varphi-\Delta\varphi+a\varphi=0\ \ \ \ \ \ \ \ \ \ \mathrm{in}\  \mathbb{R}^{N}\times (0,T),\\
\varphi(0)=\varphi_{0} \ \ \ \ \ \ \ \ \ \ \ \ \ \ \ \ \ \ \ \ \ \ \mathrm{in}\ \mathbb{R}^{N}\\
\end{array}\right.\end{equation}
with $\varphi_{0}\in L^2(\mathbb{R}^{N})$ and $a \in L^\infty(\mathbb{R}^{N}\times(0,T)).$
According to Theorem 10.9 in \cite{Brezis1} and Theorem 4.3 in \cite{Barbuv}, (\ref{1.1}) has a unique solution
\begin{equation*}\label{1.2111}
\varphi\in L^2(0,T; H^1(\mathbb{R}^{N}))\cap C([0,T]; L^2(\mathbb{R}^{N}))\cap H^{1}(0,T;H^{-1}(\mathbb{R}^{N})).
  \end{equation*}
Moveover, for each $\delta\in(0,T),$
\begin{equation*}\label{1.3111}
\varphi\in H^{1}(\delta, T;L^2(\mathbb{R}^{N}))\cap L^2(\delta, T; H^2(\mathbb{R}^{N}))\cap C([\delta, T]; H^1(\mathbb{R}^{N})).
  \end{equation*}

Here and throughout this paper, let $r$ be a positive constant and $x_{0}\in \mathbb{R}^{N}$;
$B_{r}(x_{0})$ stands for the closed ball centered  at $x_{0}$  and of radius $r;$
$Q_{r}(x_{0})$ denotes the smallest cube centered at $x_0$ so that $B_{r}(x_{0})\subset Q_{r}(x_{0})$;
$\mathrm{int}(Q_{r}(x_{0}))$ is  the interior of $Q_{r}(x_{0});$
$\|a\|_{\infty}:=\|a\|_{L^{\infty}(\mathbb{R}^{N}\times(0,T))};$
$C(\cdot)$ denotes a generic positive constant depending on what are enclosed in the brackets.

The main result of this paper concerning the observability inequality for solutions of \eqref{1.1} is stated as follows.

 \begin{theorem}\label{Thm1}
 Let $E$ be a subset of positive measure in $(0,T)$ and let $0<r_{1}<r_{2}<+\infty$.
Assume that there is a sequence $\{x_i\}_{i\geq1}\subset\mathbb R^N$ so that
\begin{equation*}\mathbb{R}^{N}=\bigcup_{i\geq1}Q_{r_{2}}(x_{i})
\quad \text{with}\quad \mathrm{int}(Q_{r_{2}}(x_{i}))\bigcap \mathrm{int}(Q_{r_{2}}(x_{j}))=\emptyset\quad \text{for each}\quad i\neq j\in\mathbb N.
\end{equation*}
Let $$\omega\triangleq\bigcup_{i\geq1}\omega_{i}
\quad\text{with}\quad B_{r_{1}}(x_{i})\subset \omega_{i} \subset B_{r_{2}}(x_{i})\quad\text{for each}\quad i\in\mathbb N.$$
Then there exist positive constants $C=C(r_{1}, r_{2})$ and
$\widetilde{C}=\widetilde{C}(r_{1}, r_{2},E)$
so that for any $\varphi_{0}\in L^{2}(\mathbb{R}^{N})$, the corresponding solution $\varphi$ of (\ref{1.1}) satisfies 
$$
\int_{\mathbb{R}^N} |\varphi(x,T)|^2\,\mathrm dx\leq
e^{\widetilde{C}}e^{C\left(T+T\|a\|_{\infty}+\|a\|_{\infty}^{2/3}\right)}
\int_{\omega\times E}|\varphi(x,t)|^{2}\mathrm dx\mathrm dt.
$$
\end{theorem}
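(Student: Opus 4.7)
The plan is to establish Theorem \ref{Thm1} in three stages: a local quantitative unique continuation (three-ball) estimate on each cube $Q_{r_2}(x_i)$ via the parabolic frequency function method, a global interpolation inequality obtained by summing the local estimates using the equidistribution hypothesis, and a time iteration that replaces a single time instant by the set $E$ of positive measure.

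For the local stage, I would fix $i$, translate so that $x_i=0$, and work on an enlarged ball $B_R$ with $Q_{r_2}(0)\subset B_R$. Following the parabolic frequency-function machinery, I would introduce the quantities
\[
H(t)=\int_{\mathbb R^N}|\varphi|^2 G\,\mathrm dx,\qquad D(t)=\int_{\mathbb R^N}|\nabla\varphi|^2 G\,\mathrm dx,\qquad N(t)=\frac{D(t)}{H(t)},
\]
where $G$ is a backward-heat-type Gaussian centred at the origin, and differentiate $\log H$ and $N$ along the heat flow. The potential contributes a term $\int a\varphi^2 G\,\mathrm dx$ that is absorbed by Young's inequality, producing the characteristic additive correction of order $\|a\|_\infty^{2/3}$ in the monotonicity of $N$. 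Combining the resulting doubling estimates with a Gaussian decomposition of the $L^2$-mass inside $B_R$ into contributions from $\omega_i$ and from its complement yields an inequality of interpolation type
\[
\int_{Q_{r_2}(x_i)}|\varphi(x,t)|^2\,\mathrm dx\leq e^{C(1+\|a\|_\infty^{2/3})}\left(\int_{\omega_i}|\varphi(x,t)|^2\,\mathrm dx\right)^{\theta}\left(\int_{B_R(x_i)}|\varphi(x,t)|^2\,\mathrm dx\right)^{1-\theta},
\]
valid at any fixed $t\in(0,T]$, with $\theta\in(0,1)$ and $C>0$ depending only on $r_1,r_2$, and in particular independent of the translation parameter $x_i$.

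For the global stage I would sum the local inequalities over $i$. The disjointness of the interiors $\mathrm{int}(Q_{r_2}(x_i))$ gives $\sum_i\|\varphi(t)\|_{L^2(Q_{r_2}(x_i))}^2=\|\varphi(t)\|_{L^2(\mathbb R^N)}^2$, while the enlarged balls $B_R(x_i)$ have uniformly bounded overlap (a number depending only on $R/r_2$). Applying the discrete Hölder inequality $\sum_i A_i^{\theta}B_i^{1-\theta}\leq(\sum_i A_i)^{\theta}(\sum_i B_i)^{1-\theta}$ and then absorbing the factor $\|\varphi(t)\|_{L^2(\mathbb R^N)}^{2(1-\theta)}$ on the right-hand side produces, for every $t\in(0,T]$,
\[
\int_{\mathbb R^N}|\varphi(x,t)|^2\,\mathrm dx\leq e^{C(1+\|a\|_\infty^{2/3})/\theta}\int_{\omega}|\varphi(x,t)|^2\,\mathrm dx.
\]

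For the time stage, I would combine this pointwise-in-time observability with the standard energy inequality $\|\varphi(T)\|_{L^2(\mathbb R^N)}\leq e^{(T-t)\|a\|_\infty}\|\varphi(t)\|_{L^2(\mathbb R^N)}$ and iterate in the style of Phung--Wang: pick a Lebesgue density point $\ell\in E$, choose a geometric sequence $t_k\downarrow\ell$, apply the pointwise observability on sub-intervals that intersect $E$ with density bounded below, and sum the resulting telescoping series; this manoeuvre produces the extra prefactor $e^{\widetilde C(r_1,r_2,E)}$ as well as the explicit exponential dependence on $T$ and on $T\|a\|_\infty$ in the final estimate. The principal technical obstacle will be the local step: tracking precisely how the bounded potential perturbs the frequency-function monotonicity so that the correction scales exactly as $\|a\|_\infty^{2/3}$ (rather than a larger power), and verifying that all constants are translation-invariant so that no loss is incurred when summing over $i$ in the global step.
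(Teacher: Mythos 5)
Your overall architecture (local quantitative unique continuation via the parabolic frequency function, summation over the equidistributed cubes, then a Phung--Wang telescoping argument over $E$) matches the paper's. But the local estimate you assert is not what the frequency function method delivers, and it is false as stated. You claim a purely spatial three-ball inequality at a \emph{fixed} time $t$,
\[
\int_{Q_{r_2}(x_i)}|\varphi(x,t)|^2\,\mathrm dx\leq e^{C(1+\|a\|_\infty^{2/3})}\Bigl(\int_{\omega_i}|\varphi(x,t)|^2\,\mathrm dx\Bigr)^{\theta}\Bigl(\int_{B_R(x_i)}|\varphi(x,t)|^2\,\mathrm dx\Bigr)^{1-\theta},
\]
with a constant independent of $t$ and $T$. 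The parabolic frequency function (the quantity $N_{\lambda,R_0}$ built from the backward Gaussian $G_\lambda$) produces doubling information that necessarily involves the history of the solution: the correct local statement is a \emph{two-ball and one-cylinder} inequality in which the large set is the space--time cylinder $Q_{2R_0}(x_i)\times(T/2,T)$ and the constant carries a factor of the form $e^{C/T}$. A fixed-time spatial three-ball inequality with a uniform constant cannot hold: already for the pure heat equation with $\varphi_0$ concentrated at a point at distance $d>0$ from $\omega$, the ratio of $\int_{\mathbb R^N}\varphi^2(\cdot,t)\,\mathrm dx$ to $\int_{\omega}\varphi^2(\cdot,t)\,\mathrm dx$ grows like $e^{d^2/(4t)}$ as $t\to0^+$, so the pointwise-in-time observability you derive in your global stage fails with a $t$-independent constant. (Had it been true, integrating over $E$ would finish the proof immediately and your telescoping stage would be superfluous --- a sign that the intermediate claim is too strong.)

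The second, related, gap is the absorption step in your global stage. After summing the correct local estimates, the right-hand side contains $\bigl(\int_{\mathbb R^N}|\varphi_0|^2\,\mathrm dx\bigr)^{\theta}$ --- the global norm of the solution at an \emph{earlier} time --- and not $\bigl(\int_{\mathbb R^N}|\varphi(\cdot,t)|^2\,\mathrm dx\bigr)^{1-\theta}$ at the same time. Since there is no backward estimate for the heat flow, this factor cannot be absorbed into the left-hand side. This is precisely why the telescoping series over a geometric sequence of times accumulating at a density point of $E$ is needed: Young's inequality converts the interpolation inequality into $\|\varphi(t_2)\|^2\leq\varepsilon\|\varphi(t_1)\|^2+\varepsilon^{-\alpha}e^{K/(t_2-t_1)}\|\varphi(t_2)\|^2_{L^2(\omega)}$, and one sums a telescoping series with a tuned choice $\varepsilon=e^{-d\kappa^{m+2}}$ and $\kappa=\sqrt{(\alpha+2)/(\alpha+1)}$ designed to tame the blow-up $e^{K/(t_2-t_1)}$ --- a blow-up entirely absent from your version of the estimates. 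To repair the argument you need to (i) replace the fixed-time local inequality by the two-ball one-cylinder estimate, which in turn requires the auxiliary lemma controlling the cylinder integral by the solution on a slightly larger ball at times near $T$ (this is where the quantity $h_0$ and the precise $\|a\|_\infty^{2/3}$ and $T\|a\|_\infty$ dependence are produced), and (ii) keep the $\|\varphi_0\|^{2\theta}$ factor and run the telescoping argument in full.
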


\medskip

Serval remarks are given below.

\begin{remark}
In the case that $E=(0,T)$,  the constant $\widetilde{C}(r_1,r_2,E)$ in the above theorem is of the form $\widetilde{C}(r_{1}, r_{2})/T$. The latter is consistent with the case of the heat equation on either bounded domains (see, e.g., \cite{AEWZ}) or the whole space
(see, e.g.,  \cite{EV17,WangZhangZhang}).
\end{remark}

\begin{remark}
It is important to point out  that in Theorem \ref{Thm1}  we obtain an observability inequality with  the same optimal dependence on the $L^\infty$-norm of the potential as in the well-known result for parabolic equations on bounded domains  (see, e.g., \cite{dzz,fz,Phung-Wang-Zhang}).
We also refer to \cite{zxz} for the similar result for Kirchhoff plate systems with potentials in unbounded domains.
\end{remark}

\begin{remark}
Quantitative unique continuation principles on multi-scale structures for Schr\"odinger and second order elliptic operators in large domains have been recently studied in \cite{DeB} and the references therein. An important feature in those works  is that the observation subdomain  satisfies a so-called equidistributed set (see Figure \ref{fig1} below for an illustration).  This indeed motivates us to impose the similar assumption on the observation subdomain $\omega\subset\mathbb R^N$ in Theorem \ref{Thm1}.
\begin{figure}[h]
\centering 
\includegraphics[width=3cm]{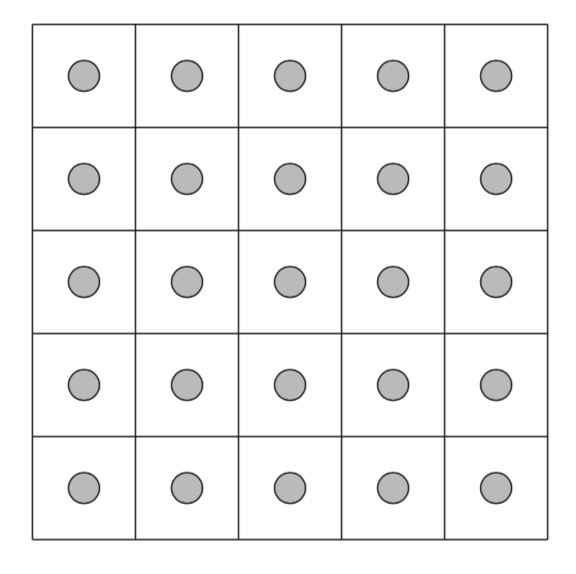}
\caption{Illustration of an 
equidistributed set in $\mathbb R^2$} 
\label{fig1}
\end{figure}
\end{remark}

\begin{remark}\label{rem1}
The result in Theorem \ref{Thm1} generalizes the observability inequality for the pure heat equation in $\mathbb R^N$ established very recently in \cite{EV17,WangZhangZhang} to that of the heat equation with space-time dependent and bounded potentials. However, the observation set $\omega$ in \cite{EV17,WangZhangZhang} is more general. To be more precise, a $\gamma$-thick set $\omega$ at scale $L$ means that in each cube $Q_L$ with the length $L$, the $N$-dimensional Lebesgue measure of $\omega\cap Q_L$ is bigger than or equals to  $\gamma L^N$. The proof therein is mainly based on quantitative estimates from measurable sets for real analytic functions.
\end{remark}

The observability inequality for parabolic equations on bounded domains has been widely
studied in past decades. When $E$ is the whole time interval  and the observation region $\omega$ is a non-empty open subset, we refer the readers to \cite{FI,fz} and a vast number of references therein for the observability inequality for parabolic equations.
In those works, the proofs are provided by the method of Carleman estimates.
When $E$ is only a subset of positive Lebesgue measure in the time interval
and the observation region $\omega$ is a non-empty open subset, we refer the readers to \cite{Phung-Wang, Phung-Wang-Zhang,WangGengsheng,wc} for the  observability inequality for parabolic equations.
More generally, when the observation subdomain is a  measurable subset of positive measure in the space and time variables,  we refer the readers to \cite{EMZ} for the observability inequality for analytic parabolic equations. The latter is mainly based on the propagation of smallness estimate for real analytic functions.

However, the studies on the observability inequality for parabolic equations on unbounded domain are rather few in last decades. We first remark that the observability inequality may not be true when the heat equation is evolving in the whole space and the observation subdomain is only a bounded and open subset (see, e.g., \cite{MZ,MZb}).
 More generally, \cite{M05a} imposed
 a condition, in terms of the Gaussian kernel, on  the  observation set so that the observability inequality for the heat equation in an unbounded domain does not hold.
Next, we would like to mention  the work \cite{CMZ} for  sufficient conditions so that the observability inequalities  hold true   for heat equations in  unbounded domains.
It showed that, for some parabolic equations in an  unbounded domain $\Omega \subset \mathbb{R}^N$, the observability inequality holds when  observations are made over a subset $\omega \subset\Omega$, with $\Omega \backslash \omega$ bounded.
For other similar results, we refer the readers to \cite{B,CMV,Gde, RM,Z16}.

Recently, there are some key progresses on this research topic.  For instance, \cite{EV17} and \cite{WangZhangZhang} independently obtained the observability inequality for the pure heat equation on the whole space, where
the observation is the thickness subset as mentioned-above in Remark \ref{rem1}.  This could be extended to
the time-independent parabolic equation associated to the Schr\"odinger operator with analytic coefficients (see \cite{EV19,leb}).
The methods utilized in these papers
are all based on the spectral inequality.
Unfortunately, they are not valid any more for the case that the coefficients
in parabolic equations are time-dependent.

The main contribution of the present paper is that we investigate a new method combined with the parabolic frequency function argument to establish the observability inequality for the heat equation with bounded and time-dependent potentials on the whole space.  More precisely, we first use the frequency function method to derive a locally quantitative estimate of unique continuation for the heat equation with a bounded potential, where we particularly quantify the dependence of the constant on the $L^\infty$-norm of the involving potential. Secondly, combined with the above local result and the geometry of the observation subdomains we
obtain a globally quantitative estimate at one time point  for solutions of the heat equation with bounded potentials. We finally utilize the so-called telescoping method to prove the desired observability inequality. It is worthing to point out that these arguments stated above are inspired from a series of works \cite{Phung-Wang-1,Phung-Wang,Phung-Wang-Zhang}.

The structure of the rest of the paper is as follows.  In Section \ref{pre}, we give several auxiliary lemmas.  They are useful in the proof of Theorem \ref{Thm1}, which will be presented in Section \ref{pro}.

\section{Preliminary lemmas}\label{pre}

First of all, we give two standard energy estimates for solutions of (\ref{1.1}).
For the sake of completeness we provide their detailed proofs in the Appendix.

\begin{Lemma}\label{lemma-1.1}
There is a constant $C_{1}>1$ so that for any $\varphi_{0}\in L^{2}(\mathbb{R}^{N})$,
the solution  $\varphi$ of (\ref{1.1}) satisfies
\begin{equation}\label{1.2}
\begin{array}{lll}
&&\displaystyle{\max_{t\in[T-\tau_{1},T]}}\int_{B_{r}(x_{0})}\varphi^{2}(x,t)\mathrm dx
+\int_{T-\tau_{1}}^{T}\int_{B_{r}(x_{0})}| \nabla\varphi(x,s)|^{2}\mathrm dx\mathrm ds\\
\\
&\leq& C_1 \left[(R-r)^{-2}+(\tau_{2}-\tau_{1})^{-1}+\|a\|_{\infty}\right]
\displaystyle{\int_{T-\tau_{2}}^{T}\int_{B_{R}(x_{0})}}\varphi^{2}(x,s)\mathrm dx\mathrm ds,\\
\\
\end{array}
\end{equation}
for all $0<r<R<+\infty$, $0<\tau_{1}<\tau_{2}<T$ and $x_{0}\in \mathbb{R}^{N}$.
\end{Lemma}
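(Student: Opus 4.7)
The plan is to prove this Caccioppoli-type energy estimate by the standard cutoff-and-multiply technique, carefully tracking how the constants depend on the geometric and potential-dependent parameters.

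First I would introduce a product cutoff function $\eta(x,t) = \chi(x)\theta(t)$ chosen as follows. Let $\chi \in C_c^\infty(\mathbb{R}^N)$ satisfy $\chi \equiv 1$ on $B_r(x_0)$, $\mathrm{supp}\,\chi \subset B_R(x_0)$, $0 \leq \chi \leq 1$, and $|\nabla\chi| \leq C/(R-r)$. Let $\theta \in C^\infty(\mathbb{R})$ satisfy $\theta \equiv 0$ on $(-\infty,T-\tau_2]$, $\theta \equiv 1$ on $[T-\tau_1,T]$, $0 \leq \theta \leq 1$, and $|\theta'| \leq C/(\tau_2-\tau_1)$. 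With such a cutoff in hand, I would test the equation $\partial_t\varphi - \Delta\varphi + a\varphi = 0$ against $\eta^2\varphi$ and integrate over $\mathbb{R}^N \times (T-\tau_2, s)$ for arbitrary $s \in [T-\tau_1, T]$. Because $\varphi$ has the regularity stated after \eqref{1.1}, all integrations by parts are justified.

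Next I would process the three resulting terms. The time derivative contributes
\[
\int_{T-\tau_2}^{s}\!\!\int_{\mathbb{R}^N} \eta^2 \varphi\,\partial_t\varphi \, dx\,dt = \tfrac{1}{2}\int_{\mathbb{R}^N}\eta^2(x,s)\varphi^2(x,s)\,dx - \int_{T-\tau_2}^{s}\!\!\int_{\mathbb{R}^N} \chi^2\theta\theta'\varphi^2 \, dx\,dt,
\]
with no boundary term at $t=T-\tau_2$ since $\theta$ vanishes there. The Laplacian term, after integration by parts in space, produces $\int \eta^2|\nabla\varphi|^2$ plus a cross term $2\int \eta\theta\varphi\,\nabla\chi\cdot\nabla\varphi$. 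The potential term contributes $\int \eta^2 a \varphi^2$, trivially bounded by $\|a\|_\infty \int \eta^2\varphi^2$. Applying the elementary inequality $2|\eta\theta\varphi\,\nabla\chi\cdot\nabla\varphi| \leq \tfrac{1}{2}\eta^2|\nabla\varphi|^2 + 2\theta^2|\nabla\chi|^2\varphi^2$ allows me to absorb half of the gradient term into the left-hand side.

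After these manipulations the inequality becomes
\[
\tfrac{1}{2}\!\int_{\mathbb{R}^N}\!\!\eta^2(\cdot,s)\varphi^2(\cdot,s)\,dx + \tfrac{1}{2}\!\int_{T-\tau_2}^{s}\!\!\int_{\mathbb{R}^N}\!\eta^2|\nabla\varphi|^2 \leq C\!\int_{T-\tau_2}^{T}\!\!\int_{B_R(x_0)}\!\!\bigl(|\theta\theta'| + \theta^2|\nabla\chi|^2 + \|a\|_\infty\bigr)\varphi^2,
\]
and inserting the bounds $|\theta'| \leq C/(\tau_2-\tau_1)$ and $|\nabla\chi|^2 \leq C/(R-r)^2$ yields precisely the bracketed factor $(R-r)^{-2}+(\tau_2-\tau_1)^{-1}+\|a\|_\infty$. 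Since $\eta(x,s) \geq \chi(x)$ for $s \in [T-\tau_1,T]$ and $\chi \equiv 1$ on $B_r(x_0)$, the left-hand side dominates $\int_{B_r(x_0)}\varphi^2(\cdot,s)\,dx + \int_{T-\tau_1}^{s}\int_{B_r(x_0)}|\nabla\varphi|^2$. Taking the supremum over $s \in [T-\tau_1,T]$ for the pointwise-in-time piece, and then choosing $s = T$ for the space-time gradient piece, gives \eqref{1.2}. I do not anticipate a genuine obstacle here; the only bookkeeping that demands care is choosing the Young splitting constant small enough to absorb the gradient cross term while keeping a single constant $C_1$ that works uniformly in $r,R,\tau_1,\tau_2,\|a\|_\infty$.
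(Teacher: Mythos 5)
Your proposal is correct and follows essentially the same route as the paper's Appendix proof: a product cutoff (the paper's $\eta(x)\xi(t)$ is your $\chi(x)\theta(t)$), testing the equation against the squared cutoff times $\varphi$ over $B_R(x_0)\times(T-\tau_2,t)$, and Young's inequality to absorb the gradient cross term, with the stated bounds on $|\nabla\chi|$ and $|\theta'|$ producing exactly the bracketed constant. No substantive difference.
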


\begin{Lemma}\label{lemma-1.2}
There is a constant $C_{2}>0$ so that for any $\varphi_{0}\in L^{2}(\mathbb{R}^{N}),$
the solution $\varphi$ of (\ref{1.1}) satisfies
\begin{equation}\label{1.3}
\displaystyle{\max_{t\in[T-\tau,T]}}\int_{B_{R}(x_{0})}| \nabla\varphi(x,t)|^{2}\mathrm{d}x\leq C_{2}\big(R^{-4}+\tau^{-2}+\|a\|^{2}_{\infty}\big)\int_{T-2\tau}^{T}\int_{B_{2R}(x_{0})}\varphi^{2}
(x,s)\mathrm{d}x\mathrm{d}s,
  \end{equation}
for all $0<R<+\infty,\ 0<\tau<T/2$ and $x_{0}\in \mathbb{R}^{N}.$
\end{Lemma}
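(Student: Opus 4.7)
The plan is to combine a Caccioppoli-type estimate for $|\nabla\varphi|$ with Lemma \ref{lemma-1.1} to upgrade the control from an $L^2_{t,x}$-norm of $|\nabla\varphi|$ to a sup-in-time norm. Introduce a smooth spatial cutoff $\chi$ with $\chi\equiv1$ on $B_{R}(x_0)$, $\mathrm{supp}\,\chi\subset B_{3R/2}(x_0)$, and $|\nabla\chi|\le C/R$, together with a smooth temporal cutoff $\eta$ with $\eta\equiv1$ on $[T-\tau,T]$, $\eta(T-3\tau/2)=0$, and $|\eta'|\le C/\tau$.

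First I would multiply the equation $\partial_t\varphi-\Delta\varphi+a\varphi=0$ by $\eta^2\chi^2\partial_t\varphi$ and integrate over $\mathbb R^N\times[T-3\tau/2,t]$ for $t\in[T-\tau,T]$. After integrating by parts in the Laplacian term, one obtains the identity
\begin{equation*}
\frac{d}{dt}\Bigl(\tfrac{\eta^2}{2}\!\int\!\chi^2|\nabla\varphi|^2\Bigr)+\eta^2\!\int\!\chi^2(\partial_t\varphi)^2
=\eta\eta'\!\int\!\chi^2|\nabla\varphi|^2-2\!\int\!\eta^2\chi(\partial_t\varphi)\,\nabla\chi\!\cdot\!\nabla\varphi-\!\int\! a\eta^2\chi^2\varphi\partial_t\varphi.
\end{equation*}
Absorbing the two cross terms involving $\partial_t\varphi$ by Young's inequality (say with constants $1/4$ on $\eta^2\chi^2(\partial_t\varphi)^2$) and integrating from $T-3\tau/2$ up to $t$ yields
\begin{equation*}
\max_{t\in[T-\tau,T]}\!\int_{B_R(x_0)}\!|\nabla\varphi(x,t)|^2\,\mathrm dx
\;\le\; C\bigl(\tau^{-1}+R^{-2}\bigr)\!\int_{T-3\tau/2}^{T}\!\int_{B_{3R/2}(x_0)}\!|\nabla\varphi|^2
+C\|a\|_\infty^2\!\int_{T-3\tau/2}^{T}\!\int_{B_{3R/2}(x_0)}\!\varphi^2.
\end{equation*}

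Next I would invoke Lemma \ref{lemma-1.1} with parameters $r=3R/2$, the outer radius $2R$, $\tau_1=3\tau/2$, $\tau_2=2\tau$ (so $\tau_2<T$ thanks to $\tau<T/2$) to bound the space-time gradient integral by
\begin{equation*}
\int_{T-3\tau/2}^{T}\!\int_{B_{3R/2}(x_0)}\!|\nabla\varphi|^2
\;\le\; C_1\bigl(4R^{-2}+2\tau^{-1}+\|a\|_\infty\bigr)\!\int_{T-2\tau}^{T}\!\int_{B_{2R}(x_0)}\!\varphi^2.
\end{equation*}
Plugging this in produces the factor $(\tau^{-1}+R^{-2})(R^{-2}+\tau^{-1}+\|a\|_\infty)$, and a final application of Young's inequality of the form $(\tau^{-1}+R^{-2})\|a\|_\infty \le \tau^{-2}+R^{-4}+\tfrac12\|a\|_\infty^2$ absorbs the mixed term and delivers the claimed $R^{-4}+\tau^{-2}+\|a\|_\infty^2$ dependence.

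Routine book-keeping aside, the delicate point I would watch is making sure the constant in front of $\|a\|_\infty$ enters quadratically rather than linearly or through a fractional power: this forces the bound $|a\eta^2\chi^2\varphi\partial_t\varphi|\le\tfrac14\eta^2\chi^2(\partial_t\varphi)^2+\|a\|_\infty^2\eta^2\chi^2\varphi^2$ (absorbing the $\partial_t\varphi$ side into the good term) rather than the simpler splitting $\|a\|_\infty\eta^2\chi^2(\varphi^2+(\partial_t\varphi)^2)$. Once this is arranged, together with the scale-matching in Lemma \ref{lemma-1.1} (where the separations $R'-r=R/2$ and $\tau_2-\tau_1=\tau/2$ give the right powers), the proof should go through cleanly.
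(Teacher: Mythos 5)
Your proposal is correct and follows essentially the same route as the paper: a localized second energy estimate obtained by testing against the time derivative (the paper phrases it via $z=\eta\xi\varphi$ and the inequality $\int|\nabla z(t)|^2\leq\iint(\Delta z-\partial_s z)^2$, you carry the cutoffs directly as multipliers and absorb the cross terms by Young), followed by Lemma~\ref{lemma-1.1} on a slightly larger ball and time window to control $\iint|\nabla\varphi|^2$. The intermediate scales ($3R/2,\,3\tau/2$ versus the paper's $4R/3,\,4\tau/3$) and the final Young step producing $R^{-4}+\tau^{-2}+\|a\|_{\infty}^{2}$ are immaterial differences, so the argument goes through as written.
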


In order to give the proof of our main result, we need the following auxiliary lemma, which is motivated by \cite[Lemma 3]{Phung-Wang-Zhang}.

\begin{Lemma}\label{lemma-1.3}
Let $0<2r\leq R<+\infty$ and $\delta\in (0,1]$.
Then there are two constants $C_{3}\triangleq C_{3}(r,\delta)>0$ and
$C_{4}\triangleq C_{4}(r,\delta)>0$ so that for any  $0<\tau_{1}<\tau_{2}<T$, $x_{0}\in \mathbb{R}^{N}$,
$\varphi_{0}\in L^2(\mathbb{R}^{N})$ with $\varphi_{0}\neq0,$ the quantity
\begin{equation}\label{1.8}
h_{0}=\frac{C_{3}}{\ln\left[(1+C_{4})\left(e^{[1+2C_{1}(1+\frac{1}{r^2})]
(1+\frac{1}{\tau_{2}-\tau_{1}}+\|a\|^{2/3}_{\infty})+\frac{4C_{3}}{T}
+2T\|a\|_{\infty}}\right)\frac{\int_{T-\tau_{2}}^{T}\int_{Q_{R}(x_{0})}
\varphi^{2}(x,t)\mathrm{d}x\mathrm{d}t}{\int_{B_{r}(x_{0})}\varphi^{2}(x,T)\mathrm{d}x}\right]}
  \end{equation}
(where $C_{1}>1$ is the constant given by Lemma~\ref{lemma-1.1}),
 has the following two properties:
  \begin{description}
\item[($i$)] \begin{equation}\label{1.9}
    0<\left(1+4C_3 T^{-1}+2T\|a\|_{\infty}+\|a\|^{2/3}_{\infty}\right)h_{0}<C_{3}.
    \end{equation}
\item[($ii$)] There is a constant $C_{5}\triangleq C_{5}(r,\delta)>C_{3}$ so that
\begin{equation}\label{1.10}
e^{2T\|a\|_{\infty}}\int_{T-\tau_{2}}^{T}\int_{Q_{R}(x_{0})}\varphi^{2}\mathrm{d}x\mathrm{d}s\leq e^{1+\frac{C_{5}}{h_{0}}}\int_{B_{(1+\delta)r}(x_{0})}\varphi^2(x,t)\mathrm{d}x
\end{equation}
\end{description}
for each $t\in[T-\min\{\tau_{2},h_{0}\},T]$.
\end{Lemma}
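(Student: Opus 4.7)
The plan, following the broad strategy of Phung--Wang--Zhang \cite{Phung-Wang-Zhang}, is to treat part~(i) as a routine algebraic verification and to reduce part~(ii) to a quantitative H\"older interpolation inequality derived via the parabolic frequency function method, combined with the specific choice of $h_{0}$ in (\ref{1.8}).

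For part~(i), I would first invoke Lemma~\ref{lemma-1.1} with $B_{r}\subset B_{R}\subset Q_{R}$ (permitted by $2r\leq R$, which gives $R-r\geq r$) to obtain the lower bound
\begin{equation*}
\frac{\int_{T-\tau_{2}}^{T}\int_{Q_{R}(x_{0})}\varphi^{2}\,\mathrm{d}x\,\mathrm{d}t}{\int_{B_{r}(x_{0})}\varphi^{2}(x,T)\,\mathrm{d}x}\geq \frac{1}{C_{1}\bigl(r^{-2}+(\tau_{2}-\tau_{1})^{-1}+\|a\|_{\infty}\bigr)}.
\end{equation*}
Combined with the elementary bound $z\leq e^{z}$, the exponential prefactor appearing inside the logarithm of (\ref{1.8}) dominates this polynomial loss once $C_{4}=C_{4}(r,\delta)$ is taken sufficiently large, so the argument of the logarithm exceeds $1$ and $h_{0}>0$ is well defined. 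For the upper bound in (\ref{1.9}), I would observe that the exponent carried inside the logarithm in (\ref{1.8}) has been tuned so that its summands $4C_{3}/T$, $2T\|a\|_{\infty}$ and $[1+2C_{1}(1+1/r^{2})]\|a\|_{\infty}^{2/3}$ already individually match $4C_{3}/T$, $2T\|a\|_{\infty}$ and $\|a\|_{\infty}^{2/3}$ on the right of (\ref{1.9}), while the residual positive term $2C_{1}(1+1/r^{2})(1+1/(\tau_{2}-\tau_{1}))$ is free to absorb the loss $\ln[C_{1}(\cdots)]$ once $C_{4}$ is fixed.

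For part~(ii), the heart of the argument is the parabolic frequency function method. After the standard conjugation $\psi=e^{-t\|a\|_{\infty}}\varphi$, which transfers the zero-order term into the overall factor $e^{2T\|a\|_{\infty}}$ on the right-hand side of (\ref{1.10}), I would introduce the backward heat-kernel--type weight
\begin{equation*}
G_{x_{0},h}(x,t)=(T-t+h)^{-N/2}\exp\!\left(-\frac{|x-x_{0}|^{2}}{T-t+h}\right),\qquad h>0,
\end{equation*}
and study the frequency quotient
\begin{equation*}
\mathcal{N}(t)=\frac{(T-t+h)\int_{\mathbb{R}^{N}}|\nabla\psi(x,t)|^{2}G_{x_{0},h}(x,t)\,\mathrm{d}x}{\int_{\mathbb{R}^{N}}\psi^{2}(x,t)G_{x_{0},h}(x,t)\,\mathrm{d}x}.
\end{equation*}
A careful differential inequality for $\mathcal{N}(t)$, controlling the potential contribution via Cauchy--Schwarz with a parameter tuned so that it enters only at scale $\|a\|_{\infty}^{2/3}$, together with the resulting almost-monotonicity of $\log\int\psi^{2}G_{x_{0},h}\,\mathrm{d}x$, yields a H\"older-type interpolation inequality of the form
\begin{equation*}
\int_{B_{r}(x_{0})}\!\varphi^{2}(x,T)\,\mathrm{d}x\leq e^{2T\|a\|_{\infty}}e^{\widetilde{C}(r,\delta)/h}\!\left(\int_{B_{(1+\delta)r}(x_{0})}\!\varphi^{2}(x,t)\,\mathrm{d}x\right)^{\alpha}\!\left(\int_{T-\tau_{2}}^{T}\!\int_{Q_{R}(x_{0})}\!\varphi^{2}\,\mathrm{d}x\,\mathrm{d}t\right)^{1-\alpha},
\end{equation*}
valid for $t\in[T-h,T]$ with $0<h\leq\tau_{2}$ and some $\alpha=\alpha(r,\delta)\in(0,1)$.

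Finally, I would specialize this inequality to $h=h_{0}$ and rearrange. Raising to the power $1/\alpha$ and isolating the ratio appearing in the logarithm of (\ref{1.8}), the right-hand side reproduces precisely that logarithm's argument, and a direct computation then yields (\ref{1.10}) with $C_{5}=C_{3}/\alpha>C_{3}$. The main technical obstacle is the derivation of the H\"older inequality above with the sharp $\|a\|_{\infty}^{2/3}$ dependence: it requires a careful Young/Cauchy--Schwarz balancing when differentiating $\mathcal{N}(t)$ so that the potential contributes at scale $\|a\|_{\infty}^{2/3}$ rather than $\|a\|_{\infty}$. As reflected in the remark following Theorem~\ref{Thm1}, this precise exponent $2/3$ is what guarantees the optimal $\|a\|_{\infty}^{2/3}$ dependence in the final observability constant.
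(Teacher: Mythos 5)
Your treatment of part (i) is essentially the paper's: both arguments lower-bound the ratio $\int\int_{Q_{R}}\varphi^{2}/\int_{B_{r}}\varphi^{2}(\cdot,T)$ via Lemma~\ref{lemma-1.1} on $B_{2r}\subset Q_{R}$, check that the exponential prefactor inside the logarithm dominates the resulting constant $C_{1}[r^{-2}+(\tau_{2}-\tau_{1})^{-1}+\|a\|_{\infty}]$ (this is where the $\|a\|_{\infty}^{2/3}$ in the exponent is actually used, via $e^{cs^{2/3}}\geq s$ --- no largeness of $C_{4}$ is needed), and then read off \eqref{1.9} because the denominator of \eqref{1.8} exceeds $1+4C_{3}/T+2T\|a\|_{\infty}+\|a\|_{\infty}^{2/3}$ termwise. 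That part is fine.

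Part (ii) is where there is a genuine gap. The paper does \emph{not} use the frequency function here at all: it multiplies the equation by $e^{-|x-x_{0}|^{2}/h}\eta^{2}\varphi$ with a cutoff $\eta$ supported in $B_{(1+\delta)r}$, derives a Gronwall-type inequality for $t\mapsto\int e^{-|x-x_{0}|^{2}/h}(\eta\varphi)^{2}$, and obtains \eqref{1.15}, in which the error term carries the decaying factor $e^{-(b_{3}-1)r^{2}/h}$; choosing $h$ proportional to $h_{0}$ makes that factor exactly cancel the ratio appearing in \eqref{1.8}, so the error is absorbed as $\frac{1}{e}\int_{B_{r}}\varphi^{2}(x,T)\,\mathrm{d}x$ and \eqref{1.10} follows directly --- no H\"older interpolation is produced or needed. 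Your proposal instead takes as its engine a two-ball--one-cylinder H\"older inequality obtained from the parabolic frequency function. But that inequality is precisely Lemma~\ref{lemma-2.2} of the paper, and its proof (following \cite{Phung-Wang-Zhang}) \emph{requires} Lemma~\ref{lemma-1.3}(ii) as input: to bound $\lambda N_{\lambda,R_{0}}(T)$ one must control quotients of the form $\int\int_{Q}\varphi^{2}\,/\int_{B_{(1+\delta)R}}\varphi^{2}(\cdot,t)$ at times $t$ near $T$ (see \eqref{3.888}, \eqref{3.999}, \eqref{3.161616}), and that control is exactly the content of \eqref{1.10}. So your argument for (ii) is circular in the paper's architecture, or at best defers the entire technical content of the lemma to an unproven interpolation estimate described only as ``a careful differential inequality.'' Relatedly, the $\|a\|_{\infty}^{2/3}$ exponent in this lemma does not arise from a Young-type balancing in $\frac{\mathrm{d}}{\mathrm{d}t}\mathcal{N}(t)$; in Lemma~\ref{lemma-1.3} the potential enters the energy identity only as $e^{2T\|a\|_{\infty}}$, and the $2/3$ power lives solely in the definition of $h_{0}$ for the reason noted above. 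To repair the proof you should replace the frequency-function step by the direct weighted energy estimate with the spatial Gaussian weight $e^{-|x-x_{0}|^{2}/h}$ and the absorption argument tied to the explicit form of $h_{0}$.
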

\begin{proof}
For each $r'>0$, we write  $B_{r'}\triangleq B_{r'}(x_{0})$ and
$Q_{r'}\triangleq Q_{r'}(x_{0})$. Since $B_{2r}\subset Q_{R}$ and
$$
e^{2C_{1}\left(1+r^{-2}\right)\left[1+(\tau_{2}-\tau_{1})^{-1}+\|a\|^{2/3}_{\infty}\right]}
\geq C_{1} \left[r^{-2}+(\tau_{2}-\tau_{1})^{-1}+\|a\|_{\infty}\right],
$$
by (\ref{1.2}) (where $R$ is replaced by $2r$), we have 
\begin{eqnarray*}
&&e^{2C_1\left(1+r^{-2}\right)\left[1+(\tau_2-\tau_1)^{-1}+\|a\|^{2/3}_{\infty}\right]}
\displaystyle{\frac{\int_{T-\tau_2}^T\int_{Q_R}\varphi^2\mathrm dx\mathrm dt}{\int_{B_r}\varphi^2(x,T)\mathrm dx}}\\
&\geq&C_1\left[r^{-2}+(\tau_2-\tau_1)^{-1}+\|a\|_{\infty}\right]
\displaystyle{\frac{\int_{T-\tau_2}^T\int_{B_{2r}}\varphi^{2}\mathrm dx\mathrm dt}{\int_{B_r}\varphi^2(x,T)\mathrm dx}}\geq 1.
\end{eqnarray*}
Hence, (\ref{1.9}) follows immediately from (\ref{1.8}).

We now turn to the proof of (\ref{1.10}).
Let $h>0$, $\beta(x)=|x-x_{0}|^2$ and $\eta\in C_{0}^{\infty}(B_{(1+\delta)r})$
be such that
$$
0\leq\eta(\cdot)\leq 1\;\;\mbox{in}\;\;B_{(1+\delta)r}\;\;\mbox{and}\;\;
\eta(\cdot)=1\;\;\mbox{in}\;\;B_{(1+3\delta/4)r}.
$$
Multiplying the first equation of (\ref{1.1}) by $e^{-\beta/h}\eta^2\varphi$ and
integrating it over $B_{(1+\delta)r}$, we get 
\begin{equation}\label{1.99999}
\begin{array}{lll}
&&\displaystyle{} \frac{1}{2}\frac{\mathrm{d}}{\mathrm{d}t}\int_{B_{(1+\delta)r}}e^{-\beta/h}(\eta\varphi)^2\mathrm{d}x
+\int_{B_{(1+\delta)r}}\nabla\varphi\cdot\nabla(e^{-\beta/h}\eta^2\varphi)\mathrm{d}x\\
\\
&=&\displaystyle{}-\int_{B_{(1+\delta)r}}ae^{-\beta/h}(\eta\varphi)^2\mathrm{d}x.
\end{array}
\end{equation}
Since
$$
\nabla(e^{-\beta/h}\eta^{2}\varphi)
=-\frac{1}{h}e^{-\beta/h}\eta^{2}\varphi\nabla\beta+2e^{-\beta/h}\eta\varphi\nabla\eta+e^{-\beta/h}\eta^{2}\nabla\varphi,$$
by (\ref{1.99999}), we have 
\begin{eqnarray*}
&&\frac{1}{2}\frac{\mathrm{d}}{\mathrm{d}t}\int_{B_{(1+\delta)r}}e^{-\beta/h}(\eta\varphi)^{2}\mathrm{d}x
+\int_{B_{(1+\delta)r}}e^{-\beta/h}|\eta\nabla\varphi|^{2}\mathrm{d}x\\
&\leq&\int_{B_{(1+\delta)r}}e^{-\beta/(2h)}|\eta\nabla\varphi|
\left(\frac{2}{h}|x-x_{0}|e^{-\beta/(2h)}\eta|\varphi|+2|\nabla\eta|e^{-\beta/(2h)}|\varphi|\right)\mathrm{d}x\\
&&+\|a\|_{\infty}\int_{B_{(1+\delta)r}}e^{-\beta/h}(\eta\varphi)^{2}\mathrm{d}x.
\end{eqnarray*}
This, along with  Cauchy-Schwarz inequality, implies that
\begin{eqnarray*}
\frac{\mathrm{d}}{\mathrm{d}t}\int_{B_{(1+\delta)r}}e^{-\beta/h}(\eta\varphi)^{2}\mathrm{d}x
&\leq&\left[\frac{4(1+\delta)^2 r^{2}}{h^2}+2\|a\|_{\infty}\right]\int_{B_{(1+\delta)r}}e^{-\beta/h}(\eta\varphi)^{2}\mathrm{d}x\\
&&+4\int_{\{x:(1+3\delta/4)r\leq\sqrt{\beta(x)}\leq(1+\delta)r\}}|\nabla\eta|^{2}e^{-\beta/h}\varphi^{2}\mathrm{d}x,
\end{eqnarray*}
which indicates that
\begin{eqnarray*}
\frac{\mathrm{d}}{\mathrm{d}t}\int_{B_{(1+\delta)r}}e^{-\beta/h}(\eta\varphi)^{2}\mathrm{d}x
&\leq&\left[\frac{4(1+\delta)^2 r^{2}}{h^2}+2\|a\|_{\infty}\right]\int_{B_{(1+\delta)r}}e^{-\beta/h}(\eta\varphi)^{2}\mathrm{d}x\\
&&+4\|\nabla\eta\|^{2}_{\infty}e^{-\frac{(1+3\delta/4)^2 r^2}{h}}\int_{B_{(1+\delta)r}}\varphi^{2}\mathrm{d}x.
\end{eqnarray*}
Here and throughout the proof of Lemma~\ref{lemma-1.3},
$\|\nabla\eta\|_{\infty}\triangleq\|\nabla\eta\|_{L^{\infty}(B_{(1+\delta)r})}$.
From the latter it follows that
\begin{eqnarray*}
&&\frac{\mathrm{d}}{\mathrm{d}t}\left[e^{-\left(\frac{4(1+\delta)^2 r^2}{h^2}+2\|a\|_{\infty}\right)t}
\int_{B_{(1+\delta)r}}e^{-\beta/h}|\eta\varphi|^{2}\mathrm{d}x\right]\\
&\leq&4\|\nabla\eta\|^{2}_{\infty}e^{-\left(\frac{4(1+\delta)^2 r^2}{h^2}+2\|a\|_{\infty}\right)t}
e^{-\frac{(1+3\delta/4)^2 r^2}{h}}\int_{B_{(1+\delta)r}}\varphi^{2}\mathrm{d}x.
\end{eqnarray*}
Integrating the latter inequality over $(t,T)$, we get 
\begin{equation}\label{1.109999}
\begin{array}{lll}
&&\displaystyle{\int_{B_{(1+\delta)r}}}e^{-\beta/h}|\eta\varphi(x,T)|^{2}\mathrm dx\\
&\leq& e^{\left(\frac{4(1+\delta)^2 r^2}{h^2}+2\|a\|_{\infty}\right)(T-t)}
\displaystyle{\int_{B_{(1+\delta)r}}}e^{-\beta/h}|\eta\varphi(x,t)|^{2}\mathrm{d}x\\
&&+4e^{\left(\frac{4(1+\delta)^2 r^2}{h^2}+2\|a\|_{\infty}\right)(T-t)}\|\nabla\eta\|^2_{\infty}
e^{-\frac{(1+3\delta/4)^2 r^2}{h}}\displaystyle{\int_t^T\int_{B_{(1+\delta)r}}}\varphi^{2}(x,s)\mathrm{d}x\mathrm{d}s.
\end{array}
\end{equation}
 We simply write  $b_{1}\triangleq 4(1+\delta)^{2}, b_{2}\triangleq (1+3\delta/4)^{2}$
 and $b_{3}\triangleq (1+\delta/2)^{2}.$ It is clear that $1<b_{3}<b_{2}<b_{1}$.
 Recall that $t\leq T$. We now suppose  $h>0$ to be such that
 $$
 0<T-\frac{(b_{2}-b_{3})h}{b_{1}}\leq t.
 $$
 Then $b_{1}(T-t)/h^{2}\leq(b_{2}-b_{3})/h$ and (\ref{1.109999}) yields
\begin{eqnarray*}
\int_{B_{(1+\delta)r}}e^{-\beta/h}|\eta\varphi(x,T)|^{2}\mathrm{d}x
&\leq& e^{\frac{(b_{2}-b_{3})r^{2}}{h}}e^{2T\|a\|_{\infty}}
\int_{B_{(1+\delta)r}}e^{-\beta/h}|\eta\varphi(x,t)|^{2}\mathrm{d}x\\
&&+4\|\nabla\eta\|^{2}_{\infty}e^{2T\|a\|_{\infty}}e^{\frac{-b_{3}r^{2}}{h}}
\int_{t}^{T}\int_{B_{(1+\delta)r}}\varphi^{2}(x,s)\mathrm{d}x\mathrm{d}s.
\end{eqnarray*}
Since $\eta(\cdot)=1$ in $B_{r}$, the above estimate gives
\begin{equation}\label{1.15}
\begin{array}{lll}
\displaystyle{}\int_{B_{r}}|\varphi(x,T)|^{2}\mathrm{d}x&\leq& e^{\frac{(b_{2}-b_{3}+1)r^{2}}{h}}e^{2T\|a\|_{\infty}}
\displaystyle{\int_{B_{(1+\delta)r}}}e^{-\beta/h}|\eta\varphi(x,t)|^{2}\mathrm{d}x\\
&&+4\|\nabla\eta\|^{2}_{\infty}e^{2T\|a\|_{\infty}}e^{\frac{-(b_{3}-1)r^{2}}{h}}
\displaystyle{\int_{t}^{T}\int_{B_{(1+\delta)r}}}\varphi^{2}(x,s)\mathrm{d}x\mathrm{d}s,
\end{array}
\end{equation}
whenever $0<T-(b_{2}-b_{3})h/b_{1}\leq t\leq T$. Recall that $h_{0}<T$ from \eqref{1.9}. We choose $h$ as follows:
$$
h=\frac{b_{1}}{b_{2}-b_{3}}h_{0}=\frac{b_{1}C_{3}/(b_{2}-b_{3})}
{\ln\left[(1+C_{4})\left(e^{\left[1+2C_{1}(1+\frac{1}{r^2})\right]
(1+\frac{1}{\tau_{2}-\tau_{1}}+\|a\|^{2/3}_{\infty})+\frac{4C_{3}}{T}+2T\|a\|_{\infty}}\right)
\frac{\int_{T-\tau_{2}}^{T}\int_{Q_{R}}\varphi^{2}\mathrm{d}x\mathrm{d}t}
{\int_{B_{r}}\varphi^{2}(x,T)\mathrm{d}x}\right]}
$$
with $C_{3}\triangleq(b_{2}-b_{3})(b_{3}-1)r^{2}/b_{1}$ and
$C_{4}\triangleq4\|\nabla\eta\|^{2}_{\infty}$.
Then for any $t\in[T-\min\{\tau_{2},h_{0}\},T]$, we have 
\begin{equation}\label{2.211111}
\begin{array}{lll}
&&\displaystyle{}4\|\nabla\eta\|^{2}_{\infty}e^{2T\|a\|_{\infty}}
e^{-\frac{(b_{3}-1)r^{2}}{h}}\int_{t}^{T}\int_{B_{(1+\delta)r}}\varphi^{2}(x,s)\mathrm{d}x\mathrm{d}s\\
\\
&=&\displaystyle{}\frac{C_{4}e^{2T\|a\|_{\infty}}\int_{t}^{T}\int_{B_{(1+\delta)r}}\varphi^{2}(x,s)\mathrm{d}x\mathrm{d}s}
{(1+C_{4})\left(e^{\left[1+2C_{1}(1+\frac{1}{r^2})\right]
(1+\frac{1}{\tau_{2}-\tau_{1}}+\|a\|^{2/3}_{\infty})+\frac{4C_{3}}{T}+2T\|a\|_{\infty}}\right)
\frac{\int_{T-\tau_{2}}^{T}\int_{Q_{R}}\varphi^{2}(x,s)\mathrm{d}x\mathrm{d}s}{\int_{B_{r}}\varphi^{2}(x,T)\mathrm{d}x}}\\
\\
&\leq&\displaystyle{} \frac{1}{e}\int_{B_{r}}\varphi^{2}(x,T)\mathrm{d}x.
\end{array}
\end{equation}
(In the last inequality, we used the facts that $(1+\delta)r\leq2r\leq R\ \mathrm{ and} \ B_{(1+\delta)r}\subset Q_{R}.$)

Next, on one hand, by \eqref{1.15} and \eqref{2.211111}, we get
\begin{equation}\label{2.22222}
\left(1-\frac{1}{e}\right)\int_{B_{r}}\varphi^{2}(x,T)\mathrm{d}x\leq
e^{\frac{(b_{2}-b_{3}+1)(b_{2}-b_{3})r^{2}}{b_{1}h_{0}}}
e^{2T\|a\|_{\infty}}\int_{B_{(1+\delta)r}}|\varphi(x,t)|^{2}\mathrm{d}x
\end{equation}
for each $T-\min{\{\tau_{2},h_{0}\}}\leq t\leq T.$
On the other hand, by \eqref{1.8}, we see
\begin{equation*}
\frac{\int_{T-\tau_{2}}^{T}\int_{Q_{R}}\varphi^{2}(x,s)\mathrm{d}x\mathrm{d}s}
{\int_{B_{r}}\varphi^{2}(x,T)\mathrm{d}x}\leq e^{\frac{C_{3}}{h_{0}}},
\end{equation*}
which, combined with \eqref{2.22222}, indicates that
$$
\left(1-\frac{1}{e}\right)e^{-\frac{C_{3}}{h_{0}}}
\int_{T-\tau_{2}}^{T}\int_{Q_{R}}\varphi^{2}(x,s)\mathrm{d}x\mathrm{d}s\leq
e^{\frac{(b_{2}-b_{3}+1)(b_{2}-b_{3})r^{2}}{b_{1}h_{0}}}
e^{2T\|a\|_{\infty}}\int_{B_{(1+\delta)r}}|\varphi(x,t)|^{2}\mathrm{d}x
$$
for each $T-\min{\{\tau_{2},h_{0}\}}\leq t\leq T.$
Since $2T\|a\|_{\infty}h_{0}<C_{3}$ (see \eqref{1.9}), the desired estimate \eqref{1.10}
follows from the latter inequality immediately with $C_{5}\triangleq3C_{3}+(b_{2}-b_{3}+1)(b_{2}-b_{3})r^{2}/b_{1}$.
\end{proof}

\section{Proof of Theorem~\ref{Thm1}}\label{pro}

We first introduce the following monotonicity of the generalized frequency function
associated with parabolic equations.
\begin{Lemma}\label{lemma-2.1}(\cite{Escauriaza}, \cite{Phung-Wang-1}, \cite{Phung-Wang-Zhang})
Let $r>0$, $\lambda>0$, $T>0$ and $x_{0}\in \mathbb{R}^{N}$. Denote
$$
G_{\lambda}(x,t)\triangleq\frac{1}{(T-t+\lambda)^{N/2}}e^{-\frac{|x-x_{0}|^{2}}{4(T-t+\lambda)}},
\;\;t\in [0,T].
$$
For $u\in H^{1}(0,T; L^{2}(B_{r}(x_{0})))\cap L^{2}(0,T; H^{2}(B_{r}(x_{0}))\cap H^{1}_{0}(B_{r}(x_{0})))$ and $t\in (0,T],$ set
\begin{equation}\label{3.111}
N_{\lambda,r}(t)\triangleq\frac{\int_{B_{r}(x_{0})}|\nabla u(x,t)|^{2}G_{\lambda}(x,t)\mathrm{d}x}{\int_{B_{r}(x_{0})}|u(x,t)|^{2}G_{\lambda}(x,t)\mathrm{d}x} \;\;\;\; \text{whenever}\;\; \int_{B_{r}(x_{0})}|u(x,t)|^{2}\mathrm{d}x\neq0.
\end{equation}
The following two properties hold:
\begin{description}
\item[($i$)] \begin{equation*}
\begin{array}{lll}
    &&\displaystyle{}\frac{1}{2}\frac{\mathrm{d}}{\mathrm{d}t}
    \int_{B_{r}(x_{0})}|u(x,t)|^{2}G_{\lambda}(x,t)\mathrm{d}x
    +\int_{B_{r}(x_{0})}|\nabla u(x,t)|^{2}G_{\lambda}(x,t)\mathrm{d}x\\
    \\
    &=&\displaystyle{}\int_{B_{r}(x_{0})}u(x,t)(\partial_{t}-\Delta)u(x,t)G_{\lambda}(x,t)\mathrm{d}x.
\end{array}
    \end{equation*}

\item[($ii$)]
\begin{equation*}
\frac{\mathrm{d}}{\mathrm{d}t}N_{\lambda,r}(t)\leq\frac{1}{T-t+\lambda}N_{\lambda,r}(t)
+\frac{\int_{B_{r}(x_{0})}|(\partial_{t}u-\Delta u)(x,t)|^{2}G_{\lambda}(x,t)\mathrm{d}x}{\int_{B_{r}(x_{0})}|u(x,t)|^{2}G_{\lambda}(x,t)\mathrm{d}x}.
\end{equation*}

\end{description}
\end{Lemma}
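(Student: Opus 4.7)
The proof rests on one crucial algebraic identity: the Gaussian kernel $G_\lambda$ satisfies the backward heat equation $(\partial_t + \Delta)G_\lambda = 0$ on $\mathbb R^N\times[0,T]$, which follows from direct computation using $\nabla G_\lambda = -G_\lambda(x-x_0)/(2(T-t+\lambda))$ and the chain rule. Every cancellation in what follows is driven by this one identity.

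For part ($i$), I would multiply $\partial_t u - \Delta u$ by $u\,G_\lambda$ and integrate over $B_r(x_0)$. The time derivative contributes $\frac{1}{2}\partial_t\int u^2 G_\lambda - \frac{1}{2}\int u^2\partial_t G_\lambda$. Since $u\in H_0^1(B_r(x_0))$ kills all boundary integrals, integration by parts on $\int(-\Delta u)\,u G_\lambda$ yields $\int|\nabla u|^2 G_\lambda + \int u\,\nabla u\cdot\nabla G_\lambda$, and a second integration by parts on the cross term, written as $\frac{1}{2}\int\nabla(u^2)\cdot\nabla G_\lambda$, produces $-\frac{1}{2}\int u^2\Delta G_\lambda$. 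The two non-identity terms collapse into $-\frac{1}{2}\int u^2(\partial_t+\Delta)G_\lambda=0$ by the backward heat equation, leaving exactly identity ($i$).

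For part ($ii$), set $H(t)=\int u^2 G_\lambda$, $D(t)=\int|\nabla u|^2 G_\lambda$, $f=T-t+\lambda$, and $Lu=\partial_t u-\Delta u$, so that $N_{\lambda,r}=D/H$. From ($i$) one already has $H'(t)=-2D+2\int u\,Lu\,G_\lambda$, so $H'/H=-2N_{\lambda,r}+2\int u\,Lu\,G_\lambda/H$. The heart of the argument is computing $D'(t)$. Differentiating under the integral, using $\partial_t u=Lu+\Delta u$, integrating by parts twice against $G_\lambda$ and $\nabla G_\lambda$ (with all boundary terms killed by the Dirichlet condition), and invoking $\partial_t G_\lambda=-\Delta G_\lambda$, one obtains a Rellich/Pohozaev-type identity in which a perfect square of the form $-2\int\bigl(\Delta u+\tfrac{x-x_0}{2f}\cdot\nabla u\bigr)^2 G_\lambda$ appears, together with lower-order terms carrying the factor $1/f$ and cross terms involving $Lu$. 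Dropping the non-positive square and applying Cauchy–Schwarz to the $Lu$ cross terms, one arrives at $\tfrac{d}{dt}\log N_{\lambda,r}=D'/D-H'/H \leq 1/f+\int(Lu)^2 G_\lambda/H$, which is exactly ($ii$) after multiplying by $N_{\lambda,r}$.

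The main obstacle is the bookkeeping in the computation of $D'(t)$: one must carry the explicit ratio $\nabla G_\lambda/G_\lambda=-(x-x_0)/(2f)$ through several integrations by parts and then recognise the perfect square associated with the weighted operator $\Delta+\tfrac{x-x_0}{2f}\cdot\nabla$, which is self-adjoint with respect to the measure $G_\lambda\,dx$. The assumed regularity $u\in H^1(0,T;L^2(B_r(x_0)))\cap L^2(0,T;H^2(B_r(x_0))\cap H_0^1(B_r(x_0)))$ is precisely what is needed to justify the formal manipulations after a standard density argument.
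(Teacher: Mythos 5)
Your part ($i$) is correct and is the standard argument: after the two integrations by parts the extra terms recombine into $-\frac{1}{2}\int u^{2}(\partial_{t}+\Delta)G_{\lambda}=0$. (The paper itself does not prove this lemma; it quotes it from the cited references, whose proof is precisely the frequency-function computation you outline, so the comparison below is against that standard proof.) For part ($ii$), however, two steps as written would fail. First, the Dirichlet condition does not kill all boundary terms in the computation of $D'(t)$, $D=\int|\nabla u|^{2}G_{\lambda}$: after the Rellich--Pohozaev integration by parts against the drift term, a boundary contribution proportional to $\int_{\partial B_{r}(x_{0})}\frac{(x-x_{0})\cdot\nu}{T-t+\lambda}\,|\partial_{\nu}u|^{2}G_{\lambda}$ survives (only $u$ and $u_{t}$ vanish on $\partial B_{r}(x_{0})$, not $\partial_{\nu}u$); it may be discarded only because it enters with a favourable sign, which uses that the ball is centered at $x_{0}$ --- the same star-shapedness the paper invokes explicitly in Step 3 of the proof of Lemma 3.2. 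Incidentally, since $\nabla G_{\lambda}/G_{\lambda}=-(x-x_{0})/(2(T-t+\lambda))$, the operator that is self-adjoint with respect to $G_{\lambda}\,\mathrm{d}x$ is $\Delta-\frac{x-x_{0}}{2(T-t+\lambda)}\cdot\nabla$, so your perfect square carries the wrong sign on the drift.

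Second, and more seriously, your final assembly does not yield ($ii$). If you establish $\frac{\mathrm{d}}{\mathrm{d}t}\log N_{\lambda,r}\leq\frac{1}{T-t+\lambda}+\frac{\int|Lu|^{2}G_{\lambda}}{\int u^{2}G_{\lambda}}$ and multiply by $N_{\lambda,r}$, you obtain $N_{\lambda,r}'\leq\frac{1}{T-t+\lambda}N_{\lambda,r}+N_{\lambda,r}\cdot\frac{\int|Lu|^{2}G_{\lambda}}{\int u^{2}G_{\lambda}}$, which has an extra factor $N_{\lambda,r}$ on the source term and is strictly weaker than ($ii$) precisely in the relevant regime $N_{\lambda,r}>1$. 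The cancellation must be performed on $N'=D'/H-N\,H'/H$ itself (with $H=\int u^{2}G_{\lambda}$), not on $(\log N)'$: insert $H'=-2D+2\int u\,Lu\,G_{\lambda}$ from ($i$) together with $D'\leq\frac{D}{T-t+\lambda}-2\int(\widetilde{\Delta}u)^{2}G_{\lambda}-2\int Lu\,\widetilde{\Delta}u\,G_{\lambda}$, where $\widetilde{\Delta}u=G_{\lambda}^{-1}\nabla\cdot(G_{\lambda}\nabla u)$ and $D=-\int u\,\widetilde{\Delta}u\,G_{\lambda}$, then complete the square in $v=\widetilde{\Delta}u+\frac{1}{2}Lu$ and apply Cauchy--Schwarz in the form $\bigl(\int uv\,G_{\lambda}\bigr)^{2}\leq H\int v^{2}G_{\lambda}$. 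All cross terms cancel and what remains is $\frac{1}{2}\int|Lu|^{2}G_{\lambda}/H$, without any factor of $N_{\lambda,r}$, which gives ($ii$).
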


We then have the following two-ball and one-cylinder inequality, which is inspired by \cite[Theorem 2]{Escauriaza}. Its proof here is adapted from  \cite[Lemma 4]{Phung-Wang-Zhang}
by using Lemma \ref{lemma-1.3} instead.
\begin{Lemma}\label{lemma-2.2}
Let $0<r<R<+\infty$ and  $\delta\in(0,1]$.
Then there are three positive constants $C_{6}\triangleq C_{6}(R,\delta), C_{7}\triangleq C_{7}(R,\delta)$ and $\gamma\triangleq\gamma(r,R,\delta)\in (0,1)$ so that for any $x_{0}\in \mathbb{R}^{N}$ and $\varphi_{0}\in L^2(\mathbb{R}^{N})$,
\begin{eqnarray*}
\int_{B_{R}(x_{0})}|\varphi(x,T)|^{2}\mathrm{d}x&\leq&
\left[C_{6}e^{[1+2C_{1}(1+\frac{1}{R^2})](1+\frac{4}{T}+\|a\|^{2/3}_{\infty})
+\frac{C_{7}}{T}+2T\|a\|_{\infty}}\int_{T/2}^{T}\int_{Q_{2R_{0}}(x_{0})}\varphi^{2}(x,t)\mathrm{d}x\mathrm{d}t\right]^{\gamma}\\
\\
&&\times\left(2\int_{B_{r}(x_0)}|\varphi(x,T)|^{2}\mathrm{d}x\right)^{1-\gamma},
\end{eqnarray*}
where $R_{0}\triangleq(1+2\delta)R$ and $C_{1}$ is the constant given by Lemma~\ref{lemma-1.1}.
\end{Lemma}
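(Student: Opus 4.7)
The plan is to adapt the frequency-function strategy of Escauriaza and of Phung--Wang--Zhang to the potential case, using Lemma~\ref{lemma-1.3} in place of the usual lower bound on the denominator of the frequency. First, I would fix a smooth cutoff $\chi$ with $\chi\equiv 1$ on $B_{R_{0}}(x_{0})$ and $\mathrm{supp}\,\chi\subset B_{2R_{0}}(x_{0})$ (this is legitimate since $R_{0}=(1+2\delta)R$ and $B_{2R_{0}}(x_{0})\subset Q_{2R_{0}}(x_{0})$), and set $u(x,t)\triangleq\chi(x)\varphi(x,t)$. Then $u\in H^{1}_{0}(B_{2R_{0}}(x_{0}))$ with enough regularity so that Lemma~\ref{lemma-2.1} applies, and a direct computation gives
\[
(\partial_{t}-\Delta)u=-au-2\nabla\chi\cdot\nabla\varphi-(\Delta\chi)\varphi,
\]
where the last two terms are supported in the annular shell $S\triangleq\mathrm{supp}\,\nabla\chi\subset\{R_{0}\leq|x-x_{0}|\leq 2R_{0}\}$.

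I would then apply Lemma~\ref{lemma-2.1}(ii) together with the pointwise bound $|(\partial_{t}-\Delta)u|^{2}\leq 3\|a\|_{\infty}^{2}u^{2}+C\bigl(|\nabla\varphi|^{2}+\varphi^{2}\bigr)\mathbf{1}_{S}$ and the Gaussian decay $G_{\lambda}(x,t)\leq(T-t+\lambda)^{-N/2}e^{-R_{0}^{2}/[4(T-t+\lambda)]}$ for $x\in S$. This turns the monotonicity into
\[
\frac{\mathrm{d}}{\mathrm{d}t}N_{\lambda,2R_{0}}(t)\leq\frac{N_{\lambda,2R_{0}}(t)}{T-t+\lambda}+3\|a\|_{\infty}^{2}+\frac{Ce^{-R_{0}^{2}/[4(T-t+\lambda)]}\int_{S}(|\nabla\varphi|^{2}+\varphi^{2})\,\mathrm{d}x}{(T-t+\lambda)^{N/2}\int_{B_{2R_{0}}(x_{0})}u^{2}G_{\lambda}\,\mathrm{d}x}.
\]
The numerator of the error term is controlled by a multiple of $\int_{T/2}^{T}\int_{Q_{2R_{0}}(x_{0})}\varphi^{2}\,\mathrm{d}x\,\mathrm{d}s$ via Lemmas~\ref{lemma-1.1} and~\ref{lemma-1.2}, while the denominator is bounded from below on a short interval $[T-h_{0},T]$ by $ce^{-CR_{0}^{2}/\lambda}\int_{B_{r}(x_{0})}\varphi^{2}(x,T)\,\mathrm{d}x$ using Lemma~\ref{lemma-1.3} with $R$ replaced by $2R_{0}$. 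Choosing $\lambda$ comparable to $h_{0}$ makes the resulting Gaussian ratio absorbable, and a Gronwall argument yields a bound $N_{\lambda,2R_{0}}(t)\leq C(R,\delta)(1+\|a\|_{\infty}^{2/3}+T^{-1})$ on $[T-h_{0},T]$.

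Next, I would use part~(i) of Lemma~\ref{lemma-2.1} which, combined with the equation for $u$, expresses the logarithmic derivative of $t\mapsto\int u^{2}G_{\lambda}\,\mathrm{d}x$ as $-2N_{\lambda,2R_{0}}(t)$ plus lower-order terms of size $O(\|a\|_{\infty})$ and a controllable shell correction. Integrating and combining with the frequency bound yields a log-convexity-type interpolation
\[
\int u^{2}(T)G_{\lambda}(T)\,\mathrm{d}x\leq e^{\text{err}}\Bigl(\int u^{2}(t_{0})G_{\lambda}(t_{0})\,\mathrm{d}x\Bigr)^{\gamma}\Bigl(\int u^{2}(T)G_{\lambda'}(T)\,\mathrm{d}x\Bigr)^{1-\gamma}
\]
for an intermediate time $t_{0}\in[T/2,T-h_{0}]$ and two parameters $\lambda,\lambda'>0$. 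Taking $\lambda'$ small localises the weighted quantity at $T$ to a multiple of $\int_{B_{r}(x_{0})}\varphi^{2}(x,T)\,\mathrm{d}x$, the weighted quantity at $t_{0}$ is dominated by the space--time cylinder integral through Lemma~\ref{lemma-1.1}, and the left-hand side (with $\lambda$ of order $R^{2}$) is bounded below by $e^{-CR^{2}/\lambda}\int_{B_{R}(x_{0})}\varphi^{2}(x,T)\,\mathrm{d}x$. Rearranging produces the stated inequality.

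The hardest step will be balancing the two competing error sources in the frequency monotonicity: the shell contribution, which forces $\lambda$ to be small to exploit the Gaussian decay of $G_{\lambda}$ on $S$; and the accumulated $\|a\|_{\infty}^{2}$ term, which forces the observation window $h_{0}$ to be small. The $\|a\|_{\infty}^{2/3}$ dependence already encoded in Lemma~\ref{lemma-1.3} captures the correct optimisation for the denominator, but it must be propagated through the exponential Gaussian factor and through the log-convexity step without being degraded to $\|a\|_{\infty}$ or $\|a\|_{\infty}^{2}$; this delicate calibration is what makes the final exponent match the statement of Lemma~\ref{lemma-2.2}.
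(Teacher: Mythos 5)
Your overall skeleton --- cut off $\varphi$ outside a larger ball, control the shell terms in the frequency inequality by Lemmas \ref{lemma-1.1}--\ref{lemma-1.2}, and lower-bound the denominator on a short window $[T-h_0,T]$ via Lemma \ref{lemma-1.3} --- matches the paper's Step 1. But two of your key steps do not go through as described. First, the claimed uniform bound $N_{\lambda,2R_0}(t)\le C(R,\delta)(1+\|a\|_\infty^{2/3}+T^{-1})$ on $[T-h_0,T]$ cannot hold: the frequency of an arbitrary solution near the final time is not bounded independently of the data (consider highly oscillatory $\varphi_0$), and a Gronwall argument on the differential inequality of Lemma \ref{lemma-2.1}(ii) only propagates a bound on $N$ forward from an earlier time, which you do not possess. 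What is actually provable, and what the paper proves, is the much weaker statement $\varepsilon\lambda N_{\lambda,R_0}(T)\le C(r,R,\delta)$ with $\varepsilon$ and $\lambda$ both of order $h_0$; obtaining it requires integrating the identity of Lemma \ref{lemma-2.1}(i) over a slice $(T-\varepsilon,T-\varepsilon/2)$ to express $e^{\frac{\varepsilon\lambda}{\varepsilon+\lambda}N_{\lambda,R_0}(T)}$ as a ratio of two weighted energies, and then bounding that ratio from above by Lemma \ref{lemma-1.1} (numerator) and Lemma \ref{lemma-1.3}(ii) (denominator). Before multiplication by $\varepsilon\lambda\sim h_0^2$, this bound is of size $e^{C/h_0}$, not $O(1)$.

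Second, and more importantly, your final interpolation step is not a valid mechanism for producing the H\"older exponent $\gamma$. The inequality you write down, interpolating $\int u^2(T)G_\lambda(T)\,\mathrm dx$ against the same quantity at time $T$ with a different Gaussian width $\lambda'$, does not follow from Lemma \ref{lemma-2.1}(i) together with a frequency bound: logarithmic convexity interpolates between two times with a fixed weight, not between two weights at a fixed time. The paper's route is different. Once $\frac{16\lambda}{r^2}\left(\frac{N}{4}+\lambda N_{\lambda,R_0}(T)\right)\le\mu(1+\mathcal K_7)<1$ is secured, the elementary star-shapedness inequality
\[
\frac{1}{16\lambda}\int_{B_{R_0}}|x-x_0|^2|u(x,T)|^2e^{-\frac{|x-x_0|^2}{4\lambda}}\,\mathrm dx\le \frac{N}{4}\int_{B_{R_0}}|u(x,T)|^2e^{-\frac{|x-x_0|^2}{4\lambda}}\,\mathrm dx+\lambda\int_{B_{R_0}}|\nabla u(x,T)|^2e^{-\frac{|x-x_0|^2}{4\lambda}}\,\mathrm dx
\]
lets one absorb the Gaussian-weighted mass of $u(T)$ outside $B_r$ and obtain the two-ball bound $\int_{B_R}\varphi^2(x,T)\,\mathrm dx\le 2e^{R^2/(4\lambda)}\int_{B_r}\varphi^2(x,T)\,\mathrm dx$ with $\lambda\sim h_0$. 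The H\"older structure then comes solely from the self-referential, logarithmic definition of $h_0$ in \eqref{1.8}: since $h_0=C_3/\ln[\,\cdots\times(\text{ratio of the cylinder energy to }\textstyle\int_{B_R}\varphi^2(T))\,]$, the factor $e^{C/h_0}$ equals that ratio raised to a fixed power, and rearranging the resulting power of $\int_{B_R}\varphi^2(x,T)\,\mathrm dx$ yields the interpolation inequality with an explicit $\gamma\in(0,1)$. Your proposal never exploits the logarithmic form of $h_0$, so it has no source for the exponent $\gamma$; this is the missing idea.
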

\begin{proof}
For each $r'>0$, we denote $B_{r'}\triangleq B_{r'}(x_{0})$ and
$Q_{r'}\triangleq Q_{r'}(x_{0})$.  Let $\chi\in C_{0}^{\infty}(B_{R_{0}})$ be such that
$$
0\leq\chi(\cdot)\leq 1 \ \mathrm{in} \ B_{R_{0}} \ \mathrm{and} \ \chi(\cdot)=1 \ \mathrm{in} \ B_{(1+3\delta/2)R}.$$
We set $u\triangleq\chi\varphi$. It is clear that
 \begin{equation}\label{3.222}
 \partial_{t}u-\Delta u=-au-2\nabla\chi\cdot\nabla\varphi-\varphi\Delta\chi \  \;\mathrm{in} \;\ B_{R_{0}}\times(0,T).
 \end{equation}
Furthermore, we define  $g\triangleq-2\nabla\chi\cdot\nabla\varphi-\varphi\Delta\chi.$\\

\textbf{Step 1}. Note that $g$ is supported on $\{x: (1+3\delta/2)R\leq|x-x_{0}|\leq R_{0}\}.$
Recall that $\chi(\cdot)=1$ in $B_{(1+\delta)R}$. We can easily check that
\begin{equation}\label{3.333}
\begin{array}{lll}
&&\displaystyle{}\frac{\int_{B_{R_{0}}} u(x,t)g(x,t)G_{\lambda}(x,t)\mathrm{d}x}
{\int_{B_{R_{0}}}|u(x,t)|^{2}G_{\lambda}(x,t)\mathrm{d}x}\\
\\
&=&\displaystyle{}\frac{\int_{B_{R_{0}}\setminus B_{(1+3\delta/2)R}}
\chi\varphi(-2\nabla\chi\cdot\nabla\varphi-\varphi\Delta\chi)
e^{-\frac{|x-x_{0}|^{2}}{4(T-t+\lambda)}}\mathrm{d}x}
{\int_{B_{R_{0}}}|\chi\varphi(x,t)|^{2}e^{-\frac{|x-x_{0}|^{2}}{4(T-t+\lambda)}}\mathrm{d}x}\\
\\
&\leq& \displaystyle{} e^{-\frac{\mathcal{K}_{1}}{T-t+\lambda}}\frac{\int_{B_{R_{0}}\setminus B_{(1+3\delta/2)R}}\left(2|\varphi\nabla\chi\cdot\nabla\varphi|
+|\Delta\chi|\varphi^{2}\right)\mathrm{d}x}{\int_{B_{(1+\delta)R}}\varphi^{2}(x,t)\mathrm{d}x},
\end{array}
\end{equation}
where $\mathcal{K}_{1}\triangleq[(1+3\delta/2)R]^2/4-[(1+\delta)R]^2/4.$
It follows from  \eqref{3.333} that
\begin{equation}\label{3.444}
\begin{array}{lll}
&&\displaystyle{}\frac{\int_{B_{R_{0}}} u(x,t)g(x,t)G_{\lambda}(x,t)\mathrm{d}x}
{\int_{B_{R_{0}}}|u(x,t)|^{2}G_{\lambda}(x,t)\mathrm{d}x}\\
\\
&\leq& \displaystyle{} e^{-\frac{\mathcal{K}_{1}}{T-t+\lambda}}\frac{2\|\nabla\chi\|_{\infty}
(\int_{B_{R_{0}}}\varphi^{2}(x,t)\mathrm{d}x)^{\frac{1}{2}}(\int_{B_{R_{0}}}
|\nabla\varphi(x,t)|^{2}\mathrm{d}x)^{\frac{1}{2}}+\|\Delta\chi\|_{\infty}\int_{B_{R_{0}}}\varphi^{2}(x,t)\mathrm{d}x}
{\int_{B_{(1+\delta)R}}\varphi^{2}(x,t)\mathrm{d}x}.
\end{array}
\end{equation}
Here $\|\nabla\chi\|_{\infty}\triangleq\|\nabla\chi\|_{L^{\infty}(B_{R_{0}})}$ and $\|\Delta\chi\|_{\infty}\triangleq\|\Delta\chi\|_{L^{\infty}(B_{R_{0}})}.$

On one hand, by Lemma~\ref{lemma-1.1} (where $r, R,\tau_{1} $ and $\tau_{2}$ are 
replaced by
$R_{0}, 2R_{0}, T/4$ and $T/2$, respectively), we have 
\begin{equation}\label{3.555}
\int_{B_{R_{0}}}\varphi^{2}(x,t)\mathrm{d}x
\leq \mathcal{K}_{2}(1+T^{-1}+\|a\|_{\infty})\int_{T/2}^{T}\int_{B_{2R_{0}}}\varphi^{2}(x,s)
\mathrm{d}x\mathrm{d}s \  \ \mathrm{for \ each} \ t\in [3T/4,T],
 \end{equation}
where $\mathcal{K}_{2}\triangleq\mathcal{K}_{2}(R)>0.$
By Lemma~\ref{lemma-1.2} (where $ R $ and $\tau$ are replaced by  $R_{0}$ and $T/4$, respectively),
we get
\begin{equation}\label{3.666}
\int_{B_{R_{0}}}|\nabla\varphi(x,t)|^{2}\mathrm{d}x\leq \mathcal{K}_{3}(1+T^{-2}+\|a\|^{2}_{\infty})\int_{T/2}^{T}\int_{B_{2R_{0}}}\varphi^{2}(x,s)\mathrm{d}x\mathrm{d}s
\  \ \mathrm{for \ each} \ t\in [3T/4,T],
\end{equation}
 where $\mathcal{K}_{3}\triangleq\mathcal{K}_{3}(R)>0.$
 It follows from \eqref{3.444}-\eqref{3.666} that
\begin{equation}\label{3.777}
\begin{array}{lll}
&&\displaystyle{}\frac{\int_{B_{R_{0}}} u(x,t)g(x,t)G_{\lambda}(x,t)\mathrm{d}x}
{\int_{B_{R_{0}}}|u(x,t)|^{2}G_{\lambda}(x,t)\mathrm{d}x}\\
\\
&\leq& \displaystyle{} e^{-\frac{\mathcal{K}_{1}}{T-t+\lambda}}
\frac{\mathcal{K}_{4}(1+T^{-2}+\|a\|^{3/2}_{\infty})
\int_{T/2}^{T}\int_{B_{2R_{0}}}\varphi^{2}(x,s)\mathrm{d}x\mathrm{d}s}
{\int_{B_{(1+\delta)R}}\varphi^{2}(x,t)\mathrm{d}x} \  \ \mathrm{for \ each} \ t\in [3T/4,T],
\end{array}
\end{equation}
 where $\mathcal{K}_{4}\triangleq\mathcal{K}_{4}(R, \delta)>0.$
 
 According to \eqref{1.10} (where $r, R,\tau_{1} $ and $\tau_{2}$ are replaced by
 $R, 2R_{0}, T/4$ and $T/2$, respectively), it holds that
 \begin{equation}\label{3.888}
\begin{array}{lll}
\displaystyle{}e^{2T\|a\|_{\infty}}\int_{T/2}^{T}\int_{B_{2R_{0}}}\varphi^{2}\mathrm{d}x\mathrm{d}s&\leq& \displaystyle{}e^{2T\|a\|_{\infty}}\int_{T/2}^{T}\int_{Q_{2R_{0}}}\varphi^{2}\mathrm{d}x\mathrm{d}s\\
\\
&\leq& \displaystyle{} e^{1+\frac{C_{5}}{h_{0}}}
\int_{B_{(1+\delta)R}}\varphi^{2}(x,t)\mathrm{d}x \  \ \mathrm{for \ each} \ t\in [T-h_{0},T].
\end{array}
\end{equation}
Here, we used the fact that $h_{0}<T/4$ (see \eqref{1.9}). This, along with \eqref{3.777}, implies that
\begin{equation}\label{3.999}
\begin{array}{lll}
\displaystyle{}\frac{\int_{B_{R_{0}}} u(x,t)g(x,t)G_{\lambda}(x,t)\mathrm{d}x}
{\int_{B_{R_{0}}}|u(x,t)|^{2}G_{\lambda}(x,t)\mathrm{d}x}\leq \displaystyle{} \mathcal{K}_{4}e^{-\frac{\mathcal{K}_{1}}{T-t+\lambda}}e^{1+\frac{C_{5}}{h_{0}}}(1+T^{-2})
 \ \ \  \mathrm{for \ each}  \ \ t\in [T-h_{0},T].
\end{array}
\end{equation}

On the other hand, by similar arguments as those for \eqref{3.444}, we have 
 \begin{eqnarray*}
&&\int_{t}^{T}\frac{\int_{B_{R_{0}}} |g(x,s)|^{2}G_{\lambda}(x,s)\mathrm{d}x}{\int_{B_{R_{0}}}|u(x,s)|^{2}G_{\lambda}(x,s)\mathrm{d}x}\mathrm{d}s\\
&\leq& \int_{t}^{T}\frac{\int_{B_{R_{0}}} |-2\nabla\chi\cdot\nabla\varphi-\varphi\Delta\chi|^{2}\mathrm{d}x}
{\int_{B_{(1+\delta)R}}|\varphi(x,s)|^{2}\mathrm{d}x}e^{-\frac{\mathcal{K}_{1}}{T-s+\lambda}}\mathrm{d}s\\
&\leq&\int_{t}^{T}\frac{8\|\nabla\chi\|^{2}_{\infty}\int_{B_{R_{0}}} |\nabla\varphi|^{2}\mathrm{d}x+2\|\Delta\chi\|^{2}_{\infty}
\int_{B_{R_{0}}}\varphi^{2}\mathrm{d}x}{\int_{B_{(1+\delta)R}}|\varphi(x,s)|^{2}\mathrm{d}x}
e^{-\frac{\mathcal{K}_{1}}{T-s+\lambda}}\mathrm{d}s
\end{eqnarray*}
 for each $t\in [3T/4,T].$ This, together with  \eqref{3.555} and  \eqref{3.666}, yields
\begin{equation}\label{3.101010}
\begin{array}{lll}
\displaystyle{}\int_{t}^{T}\frac{\int_{B_{R_{0}}} |g(x,s)|^{2}G_{\lambda}(x,s)\mathrm{d}x}{\int_{B_{R_{0}}}|u(x,s)|^{2}G_{\lambda}(x,s)\mathrm{d}x}\mathrm{d}s
\leq\displaystyle{}\mathcal{K}_{5}(1+T^{-2}+\|a\|^{2}_{\infty})
\int_{t}^{T}\frac{\int_{T/2}^{T}\int_{B_{2R_{0}}}\varphi^{2}\mathrm{d}x\mathrm{d}s}
{\int_{B_{(1+\delta)R}}|\varphi(x,s)|^{2}\mathrm{d}x}e^{-\frac{\mathcal{K}_{1}}{T-s+\lambda}}\mathrm{d}s
\end{array}
\end{equation}
for each  $t\in [3T/4,T],$ where $\mathcal{K}_{5}\triangleq\mathcal{K}_{5}(R,\delta)>0.$
It follows from \eqref{3.888} and \eqref{3.101010} that
\begin{equation}\label{3.111111}
\begin{array}{lll}
&&\displaystyle{}\int_{t}^{T}\frac{\int_{B_{R_{0}}} |g(x,s)|^{2}G_{\lambda}(x,s)\mathrm{d}x}{\int_{B_{R_{0}}}|u(x,s)|^{2}G_{\lambda}(x,s)\mathrm{d}x}\mathrm{d}s\\
&\leq&\displaystyle{}\mathcal{K}_{5}(1+T^{-2}+\|a\|^{2}_{\infty})
e^{1+\frac{C_{5}}{h_{0}}}e^{-2T\|a\|_{\infty}}\int_{t}^{T}e^{-\frac{\mathcal{K}_{1}}{T-s+\lambda}}\mathrm{d}s\\
&\leq&\displaystyle{}\mathcal{K}_{5}(1+T^{-2})
e^{1+\frac{C_{5}}{h_{0}}}e^{-\frac{\mathcal{K}_{1}}{T-t+\lambda}}(T-t) \ \ \mathrm{for \ each} \  \ t\in [T-h_{0},T].
\end{array}
\end{equation}

\vskip 3pt

\textbf{Step 2}. In this step, our plan is to give an estimate about
$\lambda N_{\lambda,R_{0}}(T)$ (see \eqref{3.111}).
By $(ii)$ of Lemma~\ref{lemma-2.1} and \eqref{3.222}, we get 
$$
\frac{\mathrm{d}}{\mathrm{d}t}N_{\lambda,R_{0}}(t)\leq
\frac{1}{T-t+\lambda}N_{\lambda,R_{0}}(t)+\frac{\int_{B_{R_{0}}}|(-au+g)(x,t)|^{2}G_{\lambda}(x,t)\mathrm{d}x}
{\int_{B_{R_{0}}}|u(x,t)|^{2}G_{\lambda}(x,t)\mathrm{d}x},
$$
which indicates that
 \begin{eqnarray*}
\frac{\mathrm{d}}{\mathrm{d}t}\left[(T-t+\lambda)N_{\lambda,R_{0}}(t)\right]
&\leq&(T-t+\lambda)\frac{\int_{B_{R_{0}}}|(-au+g)(x,t)|^{2}G_{\lambda}(x,t)\mathrm{d}x}
{\int_{B_{R_{0}}}|u(x,t)|^{2}G_{\lambda}(x,t)\mathrm{d}x}\\
&\leq&2(T-t+\lambda)\left(\|a\|^{2}_{\infty}
+\frac{\int_{B_{R_{0}}}|g(x,t)|^{2}G_{\lambda}(x,t)\mathrm{d}x}{\int_{B_{R_{0}}}|u(x,t)|^{2}G_{\lambda}(x,t)\mathrm{d}x}\right).
\end{eqnarray*}
From  the latter inequality it follows that
 \begin{eqnarray*}
 \lambda N_{\lambda,R_{0}}(T)&\leq&(T-t+\lambda)N_{\lambda,R_{0}}(t)+2\|a\|^{2}_{\infty}\int_{t}^{T}(T-s+\lambda)\mathrm{d}s\\
 &&+2\int_{t}^{T}(T-s+\lambda)\frac{\int_{B_{R_{0}}}|g(x,s)|^{2}G_{\lambda}(x,s)\mathrm{d}x}
 {\int_{B_{R_{0}}}|u(x,s)|^{2}G_{\lambda}(x,s)\mathrm{d}x}\mathrm{d}s.
 \end{eqnarray*}
Hence, for any $0<T-\varepsilon\leq t<T$ (where $\varepsilon$ will be determined later), we have 
\begin{equation}\label{3.121212}
\frac{\lambda}{\varepsilon+\lambda}N_{\lambda,R_{0}}(T)\leq N_{\lambda,R_{0}}(t)+2\varepsilon\|a\|^{2}_{\infty}+2\int_{t}^{T}
\frac{\int_{B_{R_{0}}}|g(x,s)|^{2}G_{\lambda}(x,s)\mathrm{d}x}{\int_{B_{R_{0}}}|u(x,s)|^{2}G_{\lambda}(x,s)\mathrm{d}x}\mathrm{d}s.
\end{equation}
Moreover, by $(i)$ of Lemma~\ref{lemma-2.1} and \eqref{3.222}, we observe that
\begin{eqnarray*}
&&\frac{1}{2}\frac{\mathrm{d}}{\mathrm{d}t}\int_{B_{R_{0}}}|u(x,t)|^{2}G_{\lambda}(x,t)
\mathrm{d}x+N_{\lambda,R_{0}}(t)\int_{B_{R_{0}}}| u(x,t)|^{2}G_{\lambda}(x,t)\mathrm{d}x\\
&=&-\int_{B_{R_{0}}}a(x,t)|u(x,t)|^{2}G_{\lambda}(x,t)\mathrm{d}x\\
&&
+\frac{\int_{B_{R_{0}}}u(x,t)g(x,t)G_{\lambda}(x,t)\mathrm{d}x}
{\int_{B_{R_{0}}}|u(x,t)|^{2}G_{\lambda}(x,t)\mathrm{d}x}\int_{B_{R_{0}}}|u(x,t)|^{2}G_{\lambda}(x,t)\mathrm{d}x.
\end{eqnarray*}
This, along with \eqref{3.121212}, implies 
\begin{equation}\label{3.131313}
\begin{array}{lll}
&&\displaystyle{}\frac{1}{2}\frac{\mathrm{d}}{\mathrm{d}t}
\int_{B_{R_{0}}}|u(x,t)|^{2}G_{\lambda}(x,t)\mathrm{d}x+\frac{\lambda}
{\varepsilon+\lambda}N_{\lambda,R_{0}}(T)\int_{B_{R_{0}}}|u(x,t)|^{2}G_{\lambda}(x,t)\mathrm{d}x\\
&\leq&\displaystyle{}(\|a\|_{\infty}+2\varepsilon\|a\|^{2}_{\infty})
\int_{B_{R_{0}}}|u(x,t)|^{2}G_{\lambda}(x,t)\mathrm{d}x+\int_{B_{R_{0}}}|u(x,t)|^{2}G_{\lambda}(x,t)\mathrm{d}x\\
&&\displaystyle{}\times\left(\frac{\int_{B_{R_{0}}}u(x,t)g(x,t)G_{\lambda}(x,t)\mathrm{d}x}
{\int_{B_{R_{0}}}|u(x,t)|^{2}G_{\lambda}(x,t)\mathrm{d}x}+2\int_{t}^{T}
\frac{\int_{B_{R_{0}}}|g(x,s)|^{2}G_{\lambda}(x,s)\mathrm{d}x}
{\int_{B_{R_{0}}}|u(x,s)|^{2}G_{\lambda}(x,s)\mathrm{d}x}\mathrm{d}s\right).
\end{array}
\end{equation}

Next, on one hand, it follows from  (\ref{3.999}) and (\ref{3.111111}) that
\begin{equation}\label{2.7}
\begin{array}{lll}
&&\displaystyle{}\frac{\int_{B_{R_{0}}}u(x,t)g(x,t)G_{\lambda}(x,t)\mathrm{d}x}
{\int_{B_{R_{0}}}|u(x,t)|^{2}G_{\lambda}(x,t)\mathrm{d}x}+2\int_{t}^{T}
\frac{\int_{B_{R_{0}}}|g(x,s)|^{2}G_{\lambda}(x,s)\mathrm{d}x}
{\int_{B_{R_{0}}}|u(x,s)|^{2}G_{\lambda}(x,s)\mathrm{d}x}\mathrm{d}s\\
&\leq& \mathcal{K}_{6}\left(1+\varepsilon\right)\left(1+T^{-2}\right)
e^{-\frac{\mathcal{K}_{1}}{\varepsilon+\lambda}}e^{\frac{C_{5}+\mathcal{K}_{1}}{h_{0}}}\\
&\triangleq&Q_{h_{0},\varepsilon,\lambda}
 \ \ \mathrm{for\ each} \ 0<T-\varepsilon\leq t<T \ \mathrm{ with} \  \varepsilon\in (0,h_{0}],
\end{array}
\end{equation}
where $\mathcal{K}_{6}\triangleq\mathcal{K}_{6}(R,\delta)>0.$
This, along with (\ref{3.131313}), implies that
\begin{eqnarray*}
&&\frac{1}{2}\frac{\mathrm{d}}{\mathrm{d}t}\int_{B_{R_{0}}}|u(x,t)|^{2}G_{\lambda}(x,t)\mathrm{d}x\\
&\leq& -\left(\frac{\lambda}{\varepsilon+\lambda}N_{\lambda,R_{0}}(T)
-\|a\|_{\infty}-2\varepsilon\|a\|^{2}_{\infty}-Q_{h_{0},\varepsilon,\lambda}\right)
\int_{B_{R_{0}}}| u(x,t)|^{2}G_{\lambda}(x,t)\mathrm{d}x,
\end{eqnarray*}
which indicates that
$$\frac{\mathrm{d}}{\mathrm{d}t}\left[e^{2\left(\frac{\lambda}
{\varepsilon+\lambda}N_{\lambda,R_{0}}(T)-
\|a\|_{\infty}-2\varepsilon\|a\|^{2}_{\infty}-Q_{h_{0},\varepsilon,\lambda}\right)t}\int_{B_{R_{0}}}| u(x,t)|^{2}G_{\lambda}(x,t)\mathrm{d}x\right]\leq 0
$$
for each $0<T-\varepsilon\leq t<T$ with $\varepsilon\in (0,h_{0}]$.
Integrating the latter inequality over $(T-\varepsilon,T-\varepsilon/2)$, we obtain 
\begin{eqnarray*}
&&e^{\frac{\varepsilon\lambda}{\varepsilon+\lambda}N_{\lambda,R_{0}}(T)}
\int_{B_{R_{0}}}|u(x,T-\varepsilon/2)|^{2}G_{\lambda}(x,T-\varepsilon/2)\mathrm{d}x\\
&\leq& e^{\varepsilon\|a\|_{\infty}+2\varepsilon^{2}\|a\|^{2}_{\infty}
+\varepsilon Q_{h_{0},\varepsilon,\lambda}}\int_{B_{R_{0}}}|u(x,T-\varepsilon)|^{2}G_{\lambda}(x,T-\varepsilon)\mathrm{d}x.
\end{eqnarray*}
This yields 
\begin{equation}\label{3.151515}
e^{\frac{\varepsilon\lambda}{\varepsilon+\lambda}N_{\lambda,R_{0}}(T)}\leq e^{\varepsilon\|a\|_{\infty}+2\varepsilon^{2}\|a\|^{2}_{\infty}+\varepsilon Q_{h_{0},\varepsilon,\lambda}}\frac{\int_{B_{R_{0}}}| u(x,T-\varepsilon)|^{2}e^{-\frac{|x-x_{0}|^{2}}{4(\varepsilon+\lambda)}}\mathrm{d}x}{\int_{B_{R_{0}}}| u(x,T-\varepsilon/2)|^{2}e^{-\frac{|x-x_{0}|^{2}}{4(\varepsilon/2+\lambda)}}\mathrm{d}x}.
\end{equation}

On the other hand, by (\ref{3.555}), we see
\begin{eqnarray*}
\frac{\int_{B_{R_{0}}}| u(x,T-\varepsilon)|^{2}
e^{-\frac{|x-x_{0}|^{2}}{4(\varepsilon+\lambda)}}\mathrm{d}x}{\int_{B_{R_{0}}}| u(x,T-\varepsilon/2)|^{2}e^{-\frac{|x-x_{0}|^{2}}{4(\varepsilon/2+\lambda)}}\mathrm{d}x}
&\leq&\frac{e^{\frac{((1+\delta)R)^{2}}{4(\varepsilon/2+\lambda)}}\int_{B_{R_{0}}}| \varphi(x,T-\varepsilon)|^{2}\mathrm{d}x}{\int_{B_{(1+\delta)R}}| \varphi(x,T-\varepsilon/2)|^{2}\mathrm{d}x}\\
&\leq&\frac{e^{\frac{((1+\delta)R)^{2}}{2\varepsilon}}\mathcal{K}_{2}(1+T^{-1}+\|a\|_{\infty})
\int_{T/2}^{T}\int_{Q_{2R_{0}}}\varphi^{2}\mathrm{d}x\mathrm{d}t}
{\int_{B_{(1+\delta)R}}| \varphi(x,T-\varepsilon/2)|^{2}\mathrm{d}x},
\end{eqnarray*}
which, combined with $(ii)$ of Lemma~\ref{lemma-1.3}
(where $r, R,\tau_{1} $ and $\tau_{2}$ are replaced by  $R, 2R_{0}, T/4$ and $T/2$, respectively),
indicates that
\begin{equation}\label{3.161616}
\begin{array}{lll}
\displaystyle{}\frac{\int_{B_{R_{0}}}| u(x,T-\varepsilon)|^{2}
e^{-\frac{|x-x_{0}|^{2}}{4(\varepsilon+\lambda)}}\mathrm{d}x}{\int_{B_{R_{0}}}| u(x,T-\varepsilon/2)|^{2}e^{-\frac{|x-x_{0}|^{2}}{4(\varepsilon/2+\lambda)}}\mathrm{d}x}&\leq& \displaystyle{}\frac{e^{\frac{((1+\delta)R)^{2}}{2\varepsilon}}\mathcal{K}_{2}(1+T^{-1}
+\|a\|_{\infty})e^{1+\frac{C_{5}}{h_{0}}}}{e^{2T\|a\|_{\infty}}}\\
\\
&\leq&\displaystyle{}\mathcal{K}_{2}e^{\frac{((1+\delta)R)^{2}}{2\varepsilon}}\left(1+T^{-1}\right)e^{1+\frac{C_{5}}{h_{0}}}.
\end{array}
\end{equation}
Then, it follows from (\ref{3.151515}) and (\ref{3.161616}) that for each  $\varepsilon\in (0,h_{0}]$,
\begin{equation}\label{3.171717}
\begin{array}{lll}
&&\lambda N_{\lambda,R_{0}}(T)\\
&\leq&
\displaystyle{\frac{\varepsilon+\lambda}{\varepsilon}}
\left[\varepsilon \|a\|_{\infty}+2\varepsilon^2\|a\|^{2}_{\infty}
+\varepsilon Q_{h_{0},\varepsilon,\lambda}+\displaystyle{\frac{(1+\delta)^2 R^2}{2\varepsilon}}
+1+\displaystyle{\frac{C_{5}}{h_{0}}}+\ln\left(\mathcal{K}_{2}\left(1+T^{-1}\right)\right)\right].
\end{array}
\end{equation}
Finally, we choose $\lambda=\mu\varepsilon$ with $\mu\in(0,1)$ (which will be determined later) and $\varepsilon=\frac{\mathcal{K}_{1}h_{0}}{2(C_{5}+\mathcal{K}_{1})}$ so that
$Q_{h_{0},\varepsilon,\lambda}$ (see (\ref{2.7})) satisfies
\begin{equation}\label{3.181818}
\begin{array}{lll}
Q_{h_{0},\varepsilon,\lambda}&=&\mathcal{K}_{6}
\left(1+\varepsilon\right)\left(1+T^{-2}\right)e^{\frac{C_{5}+\mathcal{K}_{1}}{h_{0}}(\frac{\mu-1}{\mu+1})}\\
\\
&\leq& \mathcal{K}_{6}\left(1+\varepsilon\right)\left(1+T^{-2}\right).
\end{array}
\end{equation}
Since $\varepsilon\leq h_{0}$, by  (\ref{3.171717}) and (\ref{3.181818}), we get 
\begin{equation}\label{3.191919}
\begin{array}{lll}
 \lambda N_{\lambda,R_{0}}(T)&\leq& 2\left[h_{0}\|a\|_{\infty}+2h_{0}^{2}\|a\|^{2}_{\infty}
 +\mathcal{K}_{6}\left(1+\varepsilon\right)\left(1+T^{-2}\right)h_{0} \right]\\
 \\
 &&+2\left[1+\frac{C_{5}}{h_{0}}+\mathcal{K}_{2}
 \left(1+T^{-1}\right)+\frac{C_{5}+\mathcal{K}_{1}}{\mathcal{K}_{1}h_{0}}(1+\delta)^2 R^2\right].
\end{array}
\end{equation}

According to $(i)$ of Lemma~\ref{lemma-1.3}
(where $r, R,\tau_{1} $ and $\tau_{2}$ are replaced by  $R, 2R_{0}, T/4$ and $T/2$, respectively),
it is clear that
$$h_{0}<C_{3},\ h_{0}<T,\ h_{0}T\|a\|_{\infty}<C_{3} \ \mathrm{and} \ h^{3}_{0}\|a\|^{2}_{\infty}<C^{3}_{3}.
$$
These, together with (\ref{3.191919}), derive that 
 \begin{eqnarray*}
 \varepsilon\lambda N_{\lambda,R_{0}}(T)&\leq& 2\left[h_{0}T\|a\|_{\infty}+2 h_{0}^{3}\|a\|^{2}_{\infty}+\mathcal{K}_{6}\left(1+h_{0}\right)\left(h_{0}^{2}+h_{0}^{2}T^{-2}\right) \right]\\
 &&+2\left[h_{0}+C_{5}+\mathcal{K}_{2}h_{0}\left(1+T^{-1}\right)
 +\frac{C_{5}+\mathcal{K}_{1}}{\mathcal{K}_{1}}(1+\delta)^2 R^{2}\right]\\
 &\leq& 2\left[C_{3}+2 C_{3}^{3}+\mathcal{K}_{6}\left(1+C_{3}\right)\left(1+C_{3}^{2}\right) \right]\\
 &&+2\left[C_{3}+C_{5}+\mathcal{K}_{2}\left(1+C_{3}\right)
 +\frac{C_{5}+\mathcal{K}_{1}}{\mathcal{K}_{1}}(1+\delta)^2 R^{2}\right].
 \end{eqnarray*}
 Hence,
 \begin{equation}\label{2.9}
 \frac{16\lambda}{r^{2}}\left(\frac{N}{4}+\lambda N_{\lambda,R_{0}}(T)\right)
 \leq\frac{16\mu}{r^{2}}\left(\frac{N}{4}C_{3}+\varepsilon\lambda N_{\lambda,R_{0}}(T)\right)\leq\mu(1+\mathcal{K}_{7}),
 \end{equation}
where $\mathcal{K}_{7}\triangleq\mathcal{K}_{7}(r,R,\delta)>0.$\\

 \textbf{Step 3}. We claim that
 \begin{equation}\label{3.212121}
\begin{array}{lll}
\displaystyle{} \int_{B_{R_{0}}}| u(x,T)|^{2}e^{-\frac{|x-x_{0}|^{2}}{4\lambda}}\mathrm{d}x
&\leq&\displaystyle{}\int_{B_{r}}|\varphi(x,T)|^{2}e^{-\frac{|x-x_{0}|^{2}}{4\lambda}}\mathrm{d}x\\
 \\
 &&\displaystyle{}+\mu(1+\mathcal{K}_{7})\int_{B_{R_{0}}}|u(x,T)|^{2}e^{-\frac{|x-x_{0}|^{2}}{4\lambda}}\mathrm{d}x.
\end{array}
 \end{equation}
Indeed, since $B_{R_{0}}$ is star-shaped with respect to $x_{0}$, we have 
(see, for instance, \cite{Escauriaza}, \cite{Phung-Wang-1} or \cite{Phung-Wang-Zhang})
 \begin{eqnarray*}
 &&\frac{1}{16\lambda}\int_{B_{R_{0}}}|x-x_{0}|^{2}| u(x,T)|^{2}e^{-\frac{|x-x_{0}|^{2}}{4\lambda}}\mathrm{d}x\\
 &\leq&\frac{N}{4}\int_{B_{R_{0}}}|u(x,T)|^{2}e^{-\frac{|x-x_{0}|^{2}}{4\lambda}}\mathrm{d}x+\lambda\int_{B_{R_{0}}}|\nabla u(x,T)|^{2}e^{-\frac{|x-x_{0}|^{2}}{4\lambda}}\mathrm{d}x.
 \end{eqnarray*}
This implies 
\begin{equation}\label{3.222222}
\begin{array}{lll}
&&\displaystyle{}\int_{B_{R_{0}}}|u(x,T)|^{2}e^{-\frac{|x-x_{0}|^{2}}{4\lambda}}\mathrm{d}x\\
\\
 &\leq&\displaystyle{} \int_{B_{R_{0}}\setminus B_{r}}
 \frac{|x-x_{0}|^{2}}{r^{2}}| u(x,T)|^{2}e^{-\frac{|x-x_{0}|^{2}}{4\lambda}}\mathrm{d}x
 \displaystyle{}+\int_{B_{r}}|u(x,T)|^{2}e^{-\frac{|x-x_{0}|^{2}}{4\lambda}}\mathrm{d}x\\
 \\
 &\leq&\displaystyle{}\frac{16\lambda}{r^{2}}
 \left[\frac{N}{4}+\lambda N_{\lambda,R_{0}}(T)\right]\int_{B_{R_{0}}}|u(x,T)|^{2}
 e^{-\frac{|x-x_{0}|^{2}}{4\lambda}}\mathrm{d}x
 +\displaystyle{}\int_{B_{r}}|\varphi(x,T)|^{2}e^{-\frac{|x-x_{0}|^{2}}{4\lambda}}\mathrm{d}x  ,
\end{array}
\end{equation}
 where in the last line, we used the definition of $N_{\lambda,R_{0}}(T)$
 and the fact that $u=\varphi$ in $B_{r}$.
 Then (\ref{3.212121}) follows from  (\ref{3.222222}) and  (\ref{2.9}) immediately.\\

 \textbf{Step 4}. End of the proof.

 We choose $\mu=1/[2(1+\mathcal{K}_{7})]$. Then, $\lambda=\mu\varepsilon=\frac{\mathcal{K}_{1}h_{0}}{4(1+\mathcal{K}_{7})(C_{5}+\mathcal{K}_{1})}.$
 By (\ref{3.212121}), we have 
 \begin{eqnarray*}
 \int_{B_{R}}|\varphi(x,T)|^{2}dx&\leq&
 e^{\frac{R^{2}}{4\lambda}}\int_{B_{R_{0}}}|u(x,T)|^{2}e^{-\frac{|x-x_{0}|^{2}}{4\lambda}}\mathrm{d}x\\
 &\leq&2e^{\frac{R^{2}}{4\lambda}}\int_{B_{r}}|\varphi(x,T)|^{2}
 e^{-\frac{|x-x_{0}|^{2}}{4\lambda}}\mathrm{d}x.
 \end{eqnarray*}
 This, along with the definition of $h_{0}$ (see (\ref{1.8}),
 where $r, R,\tau_{1} $ and $\tau_{2}$ are replaced by  $R, 2R_{0}, T/4$ and $T/2$, respectively), implies that
 \begin{eqnarray*}
 &&\int_{B_{R}}|\varphi(x,T)|^{2}dx
 \leq2e^{\frac{(1+\mathcal{K}_{7})(C_{5}+\mathcal{K}_{1})R^{2}}
 {\mathcal{K}_{1}h_{0}}}\int_{B_{r}}|\varphi(x,T)|^{2}\mathrm{d}x\\
 &\leq&\left[(1+C_{4})\left(e^{[1+2C_{1}(1+R^{-2})]
 (1+4T^{-1}+\|a\|^{2/3}_{\infty})+\frac{4C_{3}}{T}
 +2T\|a\|_{\infty}}\right)\frac{\int_{T/2}^{T}\int_{Q_{2R_{0}}}\varphi^{2}\mathrm{d}x\mathrm{d}t}
 {\int_{B_{R}}|\varphi(x,T)|^{2}\mathrm{d}x}\right]^{\frac{(1+\mathcal{K}_{7})(C_{5}+\mathcal{K}_{1})R^{2}}
 {\mathcal{K}_{1}C_{3}}}\\
 &&\times2\int_{B_{r}}|\varphi(x,T)|^{2}\mathrm{d}x.
 \end{eqnarray*}
Hence, we can conclude that the desired estimate of Lemma~\ref{lemma-2.2} holds  with
$$
\gamma=\frac{(1+\mathcal{K}_{7})(C_{5}+\mathcal{K}_{1})R^{2}}
{C_{3}\mathcal{K}_{1}+(1+\mathcal{K}_{7})(C_{5}+\mathcal{K}_{1})R^{2}}\in(0,1).
$$

\vskip 3pt

In summary, we finish the proof of this lemma.
\end{proof}

At last, based on Lemma \ref{lemma-2.2} we will derive an interpolation inequality
for solutions of \eqref{1.1} at one time point, which is analogous to those established
for parabolic equations in bounded domains; see \cite[Theorem 6]{AEWZ} and \cite[Lemma 5]{Phung-Wang-Zhang} for instance.

\begin{Lemma}\label{2.3} Let $0<r<R<+\infty$.
Suppose that $\mathbb{R}^{N}=\cup_{i\geq1}Q_{R}(\widetilde{x}_{i})$,
where $\mathrm{int}(Q_{R}(\widetilde{x}_{i}))\cap \mathrm{int}(Q_{R}(\widetilde{x}_{j}))=\emptyset$
for $i\neq j;$ $\widetilde{\omega}=\cup_{i\geq1}\widetilde{\omega}_{i}$,
where $\widetilde{\omega}_{i}, i\geq1$, is a nonempty open subset with
$B_{r}(\widetilde{x}_{i})\subset \widetilde{\omega}_{i} \subset B_{R}(\widetilde{x}_{i}).$
Then there are two constants $C_{8}\triangleq C_{8}(R)>0$ and
$\theta\triangleq\theta(r,R)\in (0,1)$ so that for any $\varphi_{0}\in L^{2}(\mathbb{R}^{N})$,
the solution $\varphi=\varphi(x,t)$ of (\ref{1.1}) satisfies
 \begin{equation}\label{3.33333}
\int_{\mathbb{R}^{N}}|\varphi(x,T)|^{2}\mathrm{d}x\leq e^{C_{8}
\left(T^{-1}+T+T\|a\|_{\infty}+\|a\|_{\infty}^{2/3}\right)}
\left(\int_{\mathbb{R}^{N}}|\varphi_{0}|^{2}\mathrm{d}x\right)^{\theta}
\left(\int_{\widetilde{\omega}}|\varphi(x,T)|^{2}\mathrm{d}x\right)^{1-\theta}.
 \end{equation}
\end{Lemma}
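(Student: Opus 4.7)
My plan is to lift the local two-ball inequality of Lemma~\ref{lemma-2.2} to a global interpolation inequality on $\mathbb{R}^N$ by summing over the given tiling with H\"older's inequality. First I note that $Q_R(\widetilde{x}_i)$, the cube of side $2R$, is contained in $B_{R\sqrt{N}}(\widetilde{x}_i)$, so the tiling hypothesis gives
\[
\int_{\mathbb{R}^N}|\varphi(x,T)|^2\,\mathrm{d}x=\sum_{i\ge1}\int_{Q_R(\widetilde{x}_i)}|\varphi(x,T)|^2\,\mathrm{d}x\le\sum_{i\ge1}\int_{B_{R\sqrt{N}}(\widetilde{x}_i)}|\varphi(x,T)|^2\,\mathrm{d}x.
\]
I then apply Lemma~\ref{lemma-2.2} at each $\widetilde{x}_i$ with outer radius $R\sqrt{N}$, inner radius $r$, and some fixed $\delta\in(0,1]$. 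This yields, with $R_0\triangleq(1+2\delta)R\sqrt{N}$ and $\gamma=\gamma(r,R,\delta)\in(0,1)$ independent of $i$,
\[
\int_{B_{R\sqrt{N}}(\widetilde{x}_i)}|\varphi(x,T)|^2\,\mathrm{d}x\le\bigl[\mathcal{E}\,J_i\bigr]^{\gamma}\Bigl(2\int_{B_r(\widetilde{x}_i)}|\varphi(x,T)|^2\,\mathrm{d}x\Bigr)^{1-\gamma},
\]
where $J_i\triangleq\int_{T/2}^T\int_{Q_{2R_0}(\widetilde{x}_i)}\varphi^2\,\mathrm{d}x\,\mathrm{d}t$ and the prefactor $\mathcal{E}=\mathcal{E}(T,\|a\|_{\infty},R)$ has logarithm of the form $C(R)(T^{-1}+T+T\|a\|_{\infty}+\|a\|_{\infty}^{2/3})$ after using $1\le T+T^{-1}$ to absorb constant terms from the exponent of Lemma~\ref{lemma-2.2}.

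Next I apply H\"older's inequality $\sum_i a_i^{\gamma}b_i^{1-\gamma}\le(\sum_i a_i)^{\gamma}(\sum_i b_i)^{1-\gamma}$ and handle the two resulting sums. The cubes $Q_{2R_0}(\widetilde{x}_i)$ have bounded overlap multiplicity $M=M(R,\delta,N)$, since their centers lie on a $2R$-lattice and each has side $4R_0$; this yields $\sum_i J_i\le M\int_{T/2}^T\int_{\mathbb{R}^N}\varphi^2\,\mathrm{d}x\,\mathrm{d}t$, which is in turn bounded by $MT e^{2T\|a\|_{\infty}}\|\varphi_0\|_{L^2(\mathbb{R}^N)}^2$ via the standard energy estimate $\|\varphi(t)\|_{L^2(\mathbb{R}^N)}^2\le e^{2t\|a\|_{\infty}}\|\varphi_0\|_{L^2(\mathbb{R}^N)}^2$ (obtained by testing \eqref{1.1} with $\varphi$ and applying Gronwall). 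On the observation side, the balls $B_R(\widetilde{x}_i)$ are inscribed in the disjoint cubes $Q_R(\widetilde{x}_i)$ and are therefore pairwise essentially disjoint; since $B_r(\widetilde{x}_i)\subset\widetilde{\omega}_i\subset B_R(\widetilde{x}_i)$, the $\widetilde{\omega}_i$ are likewise essentially disjoint, and
\[
\sum_i\int_{B_r(\widetilde{x}_i)}|\varphi(x,T)|^2\,\mathrm{d}x\le\int_{\widetilde{\omega}}|\varphi(x,T)|^2\,\mathrm{d}x.
\]

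Combining these ingredients and setting $\theta\triangleq\gamma$ produces the claimed inequality; the accumulated logarithmic prefactor is $\gamma[\log\mathcal{E}+\log(MT)+2T\|a\|_{\infty}]$, and absorbing $\log(MT)\le T+T^{-1}+\log M$ into the constant leaves it in the desired form $C_8(R)(T^{-1}+T+T\|a\|_{\infty}+\|a\|_{\infty}^{2/3})$. I expect the main obstacle to be the bookkeeping of exponents: one must verify that after raising $\mathcal{E}$ to the power $\gamma$ (which itself depends on $r$) and folding in the energy estimate and the overlap constant $M$, the final $C_8$ depends only on $R$ as advertised. The H\"older summation and the essential disjointness forced by the inscribed-ball hypothesis are standard and should not pose real difficulty.
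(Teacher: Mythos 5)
Your proposal is correct and follows essentially the same route as the paper: embed each tile $Q_R(\widetilde{x}_i)$ in $B_{\sqrt{N}R}(\widetilde{x}_i)$, apply Lemma~\ref{lemma-2.2} locally, sum using bounded overlap of the enlarged cubes and essential disjointness of the $B_r(\widetilde{x}_i)$, and finish with the energy estimate. The only cosmetic difference is that you sum the local estimates directly via the discrete H\"older inequality, whereas the paper first converts each to an additive form by Young's inequality with a free parameter $\varepsilon$, sums, and then re-optimizes over $\varepsilon$ --- an equivalent computation.
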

\begin{proof}
By Lemma~\ref{lemma-2.2} (where $r, R$ and $\delta$ are replaced by  $r, \sqrt{N}R$ and $1/2$, respectively), we obtain
\begin{eqnarray*}
 \int_{Q_{R}(\widetilde{x}_{i})}|\varphi(x,T)|^{2}\mathrm{d}x
 &\leq& \int_{B_{\sqrt{N}R}(\widetilde{x}_{i})}|\varphi(x,T)|^{2}\mathrm{d}x\\
 &\leq&\left[\mathcal{\widehat{K}}_{1}e^{[1+2C_{1}(1+R^{-2})](1+4T^{-1}+
 \|a\|^{2/3}_{\infty})+\mathcal{\widehat{K}}_{2}T^{-1}
 +2T\|a\|_{\infty}}\int_{T/2}^{T}\int_{Q_{4\sqrt{N}R}(\widetilde{x}_{i})}\varphi^{2}\mathrm{d}x\mathrm{d}t\right]^{\theta}\\
 &&\times\left[2\int_{B_{r}(\widetilde{x}_{i})}|\varphi(x,T)|^{2}\mathrm{d}x\right]^{1-\theta},
 \end{eqnarray*}
where $\mathcal{\widehat{K}}_{1}\triangleq\mathcal{\widehat{K}}_{1}(R)>0,
\mathcal{\widehat{K}}_{2}\triangleq\mathcal{\widehat{K}}_{2}(R)>0$ and
$\theta\triangleq\theta(r,R)\in (0,1).$ This, along with Young's  inequality,
implies that for each $\varepsilon>0,$
\begin{eqnarray*}
 \int_{Q_{R}(\widetilde{x}_{i})}|\varphi(x,T)|^{2}\mathrm{d}x
 &\leq&\varepsilon\theta\mathcal{\widehat{K}}_{1}e^{[1+2C_{1}(1+R^{-2})]
 (1+4T^{-1}+\|a\|^{2/3}_{\infty})+\mathcal{\widehat{K}}_{2}T^{-1}+2T\|a\|_{\infty}}
 \int_{T/2}^{T}\int_{Q_{4\sqrt{N}R}(\widetilde{x}_{i})}\varphi^{2}\mathrm{d}x\mathrm{d}t\\
 &&+2\varepsilon^{-\frac{\theta}{1-\theta}}(1-\theta)\int_{B_{r}(\widetilde{x}_{i})}
 |\varphi(x,T)|^{2}\mathrm{d}x.
 \end{eqnarray*}
Then
\begin{equation}\label{3.44444}
\begin{array}{lll}
 &&\displaystyle{}\int_{\mathbb{R}^{N}}|\varphi(x,T)|^{2}\mathrm{d}x
 =\sum_{i\geq1}\int_{Q_{R}(\widetilde{x}_{i})}|\varphi(x,T)|^{2}\mathrm{d}x\\
 \\
 &\leq&\displaystyle{}\varepsilon\theta\mathcal{\widehat{K}}_{1}
 e^{[1+2C_{1}(1+R^{-2})](1+4T^{-1}+\|a\|^{2/3}_{\infty})
 +\mathcal{\widehat{K}}_{2}T^{-1}+2T\|a\|_{\infty}}\sum_{i\geq 1}\int_{T/2}^{T}\int_{Q_{4\sqrt{N}R}(\widetilde{x}_{i})}\varphi^{2}\mathrm{d}x\mathrm{d}t\\
 \\
 &&\displaystyle{}+2\varepsilon^{-\frac{\theta}{1-\theta}}(1-\theta)\int_{\widetilde{\omega}}|\varphi(x,T)|^{2}\mathrm{d}x.
\end{array}
\end{equation}
Since
\begin{equation*}
\sum_{i\geq1}\int_{T/2}^{T}\int_{Q_{4\sqrt{N}R}(\widetilde{x}_{i})}\varphi^{2}
\mathrm{d}x\mathrm{d}t\leq \mathcal{\widehat{K}}_{3} \int_{T/2}^{T}
\int_{\mathbb{R}^{N}}\varphi^{2}\mathrm{d}x\mathrm{d}t,
 \end{equation*}
 where $\mathcal{\widehat{K}}_{3}>0$, it follows from (\ref{3.44444}) that
 \begin{eqnarray*}
 \int_{\mathbb{R}^{N}}|\varphi(x,T)|^{2}\mathrm{d}x
 &\leq&\varepsilon\theta\mathcal{\widehat{K}}_{1}\mathcal{\widehat{K}}_{3}
 e^{[1+2C_{1}(1+R^{-2})](1+4T^{-1}+\|a\|^{2/3}_{\infty})
 +\mathcal{\widehat{K}}_{2}T^{-1}+2T\|a\|_{\infty}}
 \int_{T/2}^{T}\int_{\mathbb{R}^{N}}\varphi^{2}\mathrm{d}x\mathrm{d}t\\
 &&+2\varepsilon^{-\frac{\theta}{1-\theta}}(1-\theta)
 \int_{\widetilde{\omega}}|\varphi(x,T)|^{2}\mathrm{d}x  \ \mathrm{for \ each} \ \varepsilon>0.
 \end{eqnarray*}
 This implies 
 \begin{equation}\label{3.444440}
\begin{array}{lll}
 &&\displaystyle{}\int_{\mathbb{R}^{N}}|\varphi(x,T)|^{2}\mathrm{d}x\\
 \\
 &\leq&\displaystyle{}\left[\mathcal{\widehat{K}}_{1}\mathcal{\widehat{K}}_{3}
 e^{[1+2C_{1}(1+R^{-2})](1+4T^{-1}+\|a\|^{2/3}_{\infty})
 +\mathcal{\widehat{K}}_{2}T^{-1}+2T\|a\|_{\infty}}
 \int_{T/2}^{T}\int_{\mathbb{R}^{N}}\varphi^{2}\mathrm{d}x\mathrm{d}t\right]^{\theta}\\
 \\
 &&\displaystyle{}\times\left[2\int_{\widetilde{\omega}}|\varphi(x,T)|^{2}\mathrm{d}x\right]^{1-\theta}.
\end{array}
\end{equation}

 Noting that
 $$
 \int_{\mathbb{R}^N} |\varphi(x,t)|^2\,\mathrm dx\leq e^{2\|a\|_{\infty}t}
 \int_{\mathbb{R}^N} |\varphi_0(x)|^2\,\mathrm dx\  \ \ \mathrm{for \  each} \  t\in [0,T],
 $$
 by (\ref{3.444440}), we deduce
 \begin{eqnarray*}
 \int_{\mathbb{R}^{N}}|\varphi(x,T)|^{2}\mathrm{d}x
 &\leq&\left[\mathcal{\widehat{K}}_{1}\mathcal{\widehat{K}}_{3}
 Te^{[1+2C_{1}(1+R^{-2})](1+4T^{-1}+\|a\|^{2/3}_{\infty})
 +\mathcal{\widehat{K}}_{2}T^{-1}+2T\|a\|_{\infty}}e^{2T\|a\|_{\infty}}
 \int_{\mathbb{R}^{N}}\varphi_{0}^{2}\mathrm{d}x\right]^{\theta}\\
 &&\times\left[2\int_{\widetilde{\omega}}|\varphi(x,T)|^{2}\mathrm{d}x\right]^{1-\theta}.
 \end{eqnarray*}
 Hence, (\ref{3.33333}) follows from the latter inequality immediately.\\

 In summary, we finish the proof of this lemma.
\end{proof}

Now, we are able to present the proof of Theorem~\ref{Thm1} by using the telescoping series method. For the convenience of the reader, we provide here the detailed computations  although it is more or less similar to that of \cite[Theorem 4]{Phung-Wang-Zhang}.\\

\noindent\textbf{Proof\ of\ Theorem~\ref{Thm1}}.
For any $0\leq t_{1}<t_{2}\leq T$, by a translation in the time variable and Lemma~\ref{2.3} (where $r, R, \widetilde{x}_{i}$ and $\widetilde{\omega}_{i}$ are replaced by
$r_{1}, r_{2}, x_{i}$ and $\omega_{i}$, respectively), we obtain from Young's inequality that
 \begin{equation}\label{2019-7-9}
 \|\varphi(t_{2})\|^{2}_{L^{2}(\mathbb{R}^{N})}\leq\varepsilon
 \|\varphi(t_{1})\|^{2}_{L^{2}(\mathbb{R}^{N})}+
 \frac{\mathcal{\widetilde{K}}_{1}}{\varepsilon^{\alpha}}e^{\frac{\mathcal{\widetilde{K}}_{2}}{t_{2}-t_{1}}}
 \|\varphi(t_{2})\|^{2}_{L^{2}(\omega)} \ \ \ \mathrm{for\ each}\ \varepsilon>0,
 \end{equation}
where $\mathcal{\widetilde{K}}_{1}\triangleq e^{\frac{C_8}{1-\theta}
\left(T+T\|a\|_{\infty}+\|a\|_{\infty}^{2/3}\right)},$
$\mathcal{\widetilde{K}}_{2}\triangleq C_{8}/(1-\theta)$
and $\alpha\triangleq\theta/(1-\theta)$.

Let $l$ be a density point of $E$. According to Proposition 2.1 in \cite{Phung-Wang},
for each $\kappa>1$, there exists $l_{1}\in (l,T)$, depending on $\kappa$ and $E$,
so that the sequence $\{l_{m}\}_{m\geq1}$, given by
$$
l_{m+1}=l+\frac{1}{\kappa^{m}}(l_{1}-l),
$$
satisfies 
 \begin{equation}\label{3.2525251}
l_{m}-l_{m+1}\leq 3|E\cap(l_{m+1},l_{m})|.
 \end{equation}

Next, let $0<l_{m+2}<l_{m+1}\leq t<l_{m}<l_{1}<T$. It follows from (\ref{2019-7-9}) that
\begin{equation}\label{3.2525252}
\|\varphi(t)\|^{2}_{L^{2}(\mathbb{R}^{N})}\leq \varepsilon\|\varphi(l_{m+2})\|^{2}_{L^{2}(\mathbb{R}^{N})}
+\frac{\mathcal{\widetilde{K}}_{1}}{\varepsilon^{\alpha}}
e^{\frac{\mathcal{\widetilde{K}}_{2}}{t-l_{m+2}}}\|\varphi(t)\|^{2}_{L^{2}(\omega)} \ \mathrm{for\ each}\ \varepsilon>0.
 \end{equation}
By a standard energy estimate,  we have
$$
\|\varphi(l_{m})\|_{L^{2}(\mathbb{R}^{N})}\leq e^{T\|a\|_{\infty}}\|\varphi(t)\|_{L^{2}(\mathbb{R}^{N})}.
$$
This, along with (\ref{3.2525252}), implies
$$
\|\varphi(l_{m})\|^{2}_{L^{2}(\mathbb{R}^{N})}\leq e^{2T\|a\|_{\infty}}\left(\varepsilon\|\varphi(l_{m+2})
\|^{2}_{L^{2}(\mathbb{R}^{N})}+
\frac{\mathcal{\widetilde{K}}_{1}}{\varepsilon^{\alpha}}
e^{\frac{\mathcal{\widetilde{K}}_{2}}{t-l_{m+2}}}\|\varphi(t)\|^{2}_{L^{2}(\omega)}\right)
\ \mathrm{for\ each}\ \varepsilon>0,
$$
which indicates that
$$\|\varphi(l_{m})\|^{2}_{L^{2}(\mathbb{R}^{N})}
\leq \varepsilon\|\varphi(l_{m+2})\|^{2}_{L^{2}(\mathbb{R}^{N})}
+\frac{\mathcal{\widetilde{K}}_{3}}{\varepsilon^{\alpha}}
e^{\frac{\mathcal{\widetilde{K}}_{2}}{t-l_{m+2}}}\|\varphi(t)\|^{2}_{L^{2}(\omega)}
 \ \mathrm{for\ each}\ \varepsilon>0,
 $$
where $\mathcal{\widetilde{K}}_{3}=(e^{2T\|a\|_{\infty}})^{1+\alpha}\mathcal{\widetilde{K}}_{1}$.
Integrating the latter inequality over $ E\cap(l_{m+1},l_{m})$ gives
\begin{equation}\label{3.2525253}
\begin{array}{lll}
 \displaystyle{}|E\cap(l_{m+1},l_{m})|\|\varphi(l_{m})\|^{2}_{L^{2}(\mathbb{R}^{N})}
 &\leq&\displaystyle{}\varepsilon |E\cap(l_{m+1},l_{m})|\|\varphi(l_{m+2})\|^{2}_{L^{2}(\mathbb{R}^{N})}\\
 &&\displaystyle{}+\frac{\mathcal{\widetilde{K}}_{3}}
 {\varepsilon^{\alpha}}e^{\frac{\mathcal{\widetilde{K}}_{2}}{l_{m+1}-l_{m+2}}}
 \int_{l_{m+1}}^{l_{m}}\chi_{E}\|\varphi(t)\|^{2}_{L^{2}(\omega)}\mathrm{d}t
 \;\ \mathrm{for\ each}\ \varepsilon>0.
\end{array}
\end{equation}
Here and in the sequel, $\chi_{E}$ denotes the characteristic function of $E$.

 Since $l_{m}-l_{m+1}=(\kappa-1)(l_{1}-l)/\kappa^{m},$ by (\ref{3.2525253}) and (\ref{3.2525251}), we obtain 
\begin{eqnarray*}
\|\varphi(l_{m})\|^{2}_{L^{2}(\mathbb{R}^{N})}&\leq& \varepsilon \|\varphi(l_{m+2})\|^{2}_{L^{2}(\mathbb{R}^{N})}+\frac{1}{|E\cap(l_{m+1},l_{m})|}
\frac{\mathcal{\widetilde{K}}_{3}}{\varepsilon^{\alpha}}
e^{\frac{\mathcal{\widetilde{K}}_{2}}{l_{m+1}-l_{m+2}}}
\int_{l_{m+1}}^{l_{m}}\chi_{E}\|\varphi(t)\|^{2}_{L^{2}(\omega)}\mathrm{d}t\\
&\leq&\frac{3\kappa^{m}}{(l_{1}-l)(\kappa-1)}
\frac{\mathcal{\widetilde{K}}_{3}}{\varepsilon^{\alpha}}
e^{\mathcal{\widetilde{K}}_{2}\left(\frac{1}{l_{1}-l}\frac{\kappa^{m+1}}{\kappa-1}\right)}
\int_{l_{m+1}}^{l_{m}}\chi_{E}\|\varphi(t)\|^{2}_{L^{2}(\omega)}\mathrm{d}t+
\varepsilon \|\varphi(l_{m+2})\|^{2}_{L^{2}(\mathbb{R}^{N})}
 \end{eqnarray*}
 for each $\varepsilon>0$.
This yields 
\begin{equation}\label{3.2525254}
\begin{array}{lll}
 \displaystyle{}\|\varphi(l_{m})\|^{2}_{L^{2}(\mathbb{R}^{N})}&\leq& \displaystyle{}
 \frac{1}{\varepsilon^{\alpha}}\frac{3}{\kappa}
 \frac{\mathcal{\widetilde{K}}_{3}}{\mathcal{\widetilde{K}}_{2}}
 e^{2\mathcal{\widetilde{K}}_{2}\left(\frac{1}{l_{1}-l}
 \frac{\kappa^{m+1}}{\kappa-1}\right)}\int_{l_{m+1}}^{l_{m}}\chi_{E}
 \|\varphi(t)\|^{2}_{L^{2}(\omega)}\mathrm{d}t\displaystyle{}+
 \varepsilon \|\varphi(l_{m+2})\|^{2}_{L^{2}(\mathbb{R}^{N})}
\end{array}
\end{equation}
for each $\varepsilon>0$.
Denote by $d\triangleq \frac{2\mathcal{\widetilde{K}}_{2}}{\kappa(l_{1}-l)(\kappa-1)}$.
It follows from (\ref{3.2525254}) that
\begin{eqnarray*}
\varepsilon^{\alpha}e^{-d\kappa^{m+2}}\|\varphi(l_{m})\|^{2}_{L^{2}(\mathbb{R}^{N})}
-\varepsilon^{1+\alpha}e^{-d\kappa^{m+2}}\|\varphi(l_{m+2})\|^{2}_{L^{2}(\mathbb{R}^{N})}
\leq\frac{3}{\kappa}\frac{\mathcal{\widetilde{K}}_{3}}
{\mathcal{\widetilde{K}}_{2}}\int_{l_{m+1}}^{l_{m}}\chi_{E}\|\varphi(t)\|^{2}_{L^{2}(\omega)}\mathrm{d}t
\end{eqnarray*}
for each $\varepsilon>0$.

Choosing $\varepsilon=e^{-d\kappa^{m+2}}$ in the above inequality gives 
\begin{equation}\label{3.25252555}
\begin{array}{lll}
 &&\displaystyle{}e^{-(1+\alpha)d\kappa^{m+2}}\|\varphi(l_{m})\|^{2}_{L^{2}(\mathbb{R}^{N})}
 -e^{-(2+\alpha)d\kappa^{m+2}}\|\varphi(l_{m+2})\|^{2}_{L^{2}(\mathbb{R}^{N})}\\
 &\leq&\displaystyle{}\frac{3}{\kappa}\frac{\mathcal{\widetilde{K}}_{3}}
 {\mathcal{\widetilde{K}}_{2}}\int_{l_{m+1}}^{l_{m}}\chi_{E}\|\varphi(t)\|^{2}_{L^{2}(\omega)}\mathrm{d}t.
\end{array}
\end{equation}
Take $\kappa=\sqrt{\frac{\alpha+2}{\alpha+1}}$ in (\ref{3.25252555}). Then we have 
\begin{eqnarray*}
e^{-(2+\alpha)d\kappa^{m}}\|\varphi(l_{m})\|^{2}_{L^{2}(\mathbb{R}^{N})}
-e^{-(2+\alpha)d\kappa^{m+2}}\|\varphi(l_{m+2})\|^{2}_{L^{2}(\mathbb{R}^{N})}
\leq \frac{3}{\kappa}\frac{\mathcal{\widetilde{K}}_{3}}{\mathcal{\widetilde{K}}_{2}}
\int_{l_{m+1}}^{l_{m}}\chi_{E}\|\varphi(t)\|^{2}_{L^{2}(\omega)}\mathrm{d}t.
\end{eqnarray*}
Changing $m$ to $2m'$ and summing the above inequality from $m'=1$ to infinity give the desired result. Indeed,
\begin{eqnarray*}
&&e^{-2T\|a\|_{\infty}}e^{-(2+\alpha)d\kappa^{2}}\|\varphi(T)\|^{2}_{L^{2}(\mathbb{R}^{N})}\\
&\leq& e^{-(2+\alpha)d\kappa^{2}}\|\varphi(l_{2})\|^{2}_{L^{2}(\mathbb{R}^{N})}\\
&\leq&\sum_{m'=1}^{+\infty}\left(e^{-(2+\alpha)d\kappa^{2m'}}\|\varphi(l_{2m'})\|_{L^{2}(\mathbb{R}^{N})}
-e^{-(2+\alpha)d\kappa^{2m'+2}}\|\varphi(l_{2m'+2})\|^{2}_{L^{2}(\mathbb{R}^{N})}\right)\\
&\leq& \frac{3}{\kappa}\frac{\mathcal{\widetilde{K}}_{3}}{\mathcal{\widetilde{K}}_{2}}\sum_{m'=1}^{+\infty}
\int_{l_{2m'+1}}^{l_{2m'}}\chi_{E}\|\varphi(t)\|^{2}_{L^{2}(\omega)}\mathrm{d}t\\
&\leq& \frac{3}{\kappa}\frac{\mathcal{\widetilde{K}}_{3}}
{\mathcal{\widetilde{K}}_{2}}\int_{0}^{T}\chi_{E}\|\varphi(t)\|^{2}_{L^{2}(\omega)}\mathrm{d}t.
 \end{eqnarray*}

In summary, we finish the proof of Theorem~\ref{Thm1}.
\qed

\vskip 3mm

We show an application of Theorem~\ref{Thm1} on the null controllability from measurable sets in the time variable. The latter plays an important role in deriving the bang-bang property for the time optimal control problem (see, e.g., \cite{AEWZ, Phung-Wang}). Under the same assumptions of Theorem~\ref{Thm1},
we consider the following controlled equation:
\begin{equation}\label{nullcontrol22}
\left\{ \begin{array}{lll}
\partial_{t}y-\Delta y+b(x,t)y=\chi_{\omega}\chi_{E}u\ \ \ \ \ \ \ \ \mathrm{in}\  \mathbb{R}^{N}\times (0,T),\\
y(0)=y_{0} \ \ \ \ \ \ \ \ \ \ \ \ \ \ \ \ \ \ \ \ \ \ \ \ \ \ \ \ \ \ \ \ \ \ \mathrm{in}\ \mathbb{R}^{N},\\
\end{array}\right.\end{equation}
where $y_{0}\in L^2(\mathbb{R}^{N})$ is an initial state, $b\in L^{\infty}(\mathbb{R}^{N}\times(0,T))$, and 
$u\in L^{2}(\mathbb{R}^{N}\times(0,T))$ is a control function.
Write $y(\cdot; y_{0}, u)$ for the solution to (\ref{nullcontrol22}).
By a standard duality method (see, for instance, \cite{WangGengsheng}) and Theorem~\ref{Thm1},
we can easily obtain the following null controllability result. (Its proof will be omitted here.)
\begin{corollary}\label{Thm22} Under the assumption of  Theorem~\ref{Thm1},
for each $y_{0}\in L^2(\mathbb{R}^{N})$,
there is a control $u\in L^{2}(\mathbb{R}^{N}\times(0,T))$, with
\begin{equation*}
\|u\|_{L^2(0,T;L^2(\mathbb{R}^{N}))}\leq
e^{\widetilde{C}} e^{C\left(T+T\|b\|_{\infty}+\|b\|_{\infty}^{2/3}\right)}\|y_{0}\|_{L^{2}(\mathbb{R}^{N})},
  \end{equation*}
where the constants $C$ and $\widetilde{C}$ are given by  Theorem~\ref{Thm1}, so that $y(T; y_{0},u)=0.$
\end{corollary}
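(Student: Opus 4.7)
The plan is to derive null controllability from Theorem~\ref{Thm1} via the classical Hilbert Uniqueness Method (HUM), for which the first task is to obtain the observability inequality for the adjoint system.

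The adjoint system associated with \eqref{nullcontrol22} is the backward heat equation
\begin{equation*}
-\partial_{t}\varphi-\Delta\varphi+b\varphi=0\ \text{in } \mathbb{R}^{N}\times(0,T), \qquad \varphi(T)=\varphi_{T}\in L^{2}(\mathbb{R}^{N}).
\end{equation*}
First I would perform the time reversal $\psi(x,t)\triangleq\varphi(x,T-t)$, which converts this into a forward heat equation
\begin{equation*}
\partial_{t}\psi-\Delta\psi+\tilde b\,\psi=0, \qquad \tilde b(x,t)\triangleq b(x,T-t),
\end{equation*}
with $\|\tilde b\|_{\infty}=\|b\|_{\infty}$. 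Applying Theorem~\ref{Thm1} to $\psi$ on $\mathbb{R}^{N}$ with the observation set $\omega\times\widetilde E$, where $\widetilde E\triangleq\{T-t:t\in E\}\subset(0,T)$ has the same positive measure as $E$, and translating back to $\varphi$ gives
\begin{equation*}
\int_{\mathbb{R}^{N}}|\varphi(x,0)|^{2}\mathrm dx\leq \kappa^{2}\int_{\omega\times E}|\varphi(x,t)|^{2}\mathrm dx\mathrm dt,\qquad \kappa^{2}\triangleq e^{\widetilde C}e^{C(T+T\|b\|_{\infty}+\|b\|_{\infty}^{2/3})},
\end{equation*}
with the same constants $C=C(r_{1},r_{2})$ and $\widetilde C=\widetilde C(r_{1},r_{2},E)$ (noting that the dependence on $E$ is invariant under time reversal, since $|E|=|\widetilde E|$ and the density-point construction in the proof of Theorem~\ref{Thm1} applies identically).

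Next I would set up the HUM. Define the continuous linear functional $L$ on the range $\mathcal R\triangleq\{\varphi|_{\omega\times E}:\varphi_{T}\in L^{2}(\mathbb{R}^{N})\}\subset L^{2}(\omega\times E)$ by
\begin{equation*}
L(\varphi|_{\omega\times E})\triangleq-\int_{\mathbb{R}^{N}}y_{0}(x)\varphi(x,0)\mathrm dx.
\end{equation*}
The observability inequality together with Cauchy--Schwarz yields
\begin{equation*}
|L(\varphi|_{\omega\times E})|\leq \|y_{0}\|_{L^{2}(\mathbb{R}^{N})}\,\kappa\,\|\varphi\|_{L^{2}(\omega\times E)},
\end{equation*}
so $L$ is well-defined (independent of the representative), linear, and bounded with norm at most $\kappa\|y_{0}\|_{L^{2}(\mathbb{R}^{N})}$. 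By Hahn--Banach extension and Riesz representation, there exists $u\in L^{2}(\omega\times E)$, extended by zero to $\mathbb{R}^{N}\times(0,T)$, satisfying
\begin{equation*}
\int_{0}^{T}\!\!\int_{\omega}\chi_{E}(t)u(x,t)\varphi(x,t)\mathrm dx\mathrm dt=-\int_{\mathbb{R}^{N}}y_{0}(x)\varphi(x,0)\mathrm dx \quad \text{for every admissible }\varphi_{T},
\end{equation*}
together with the bound $\|u\|_{L^{2}(\mathbb{R}^{N}\times(0,T))}\leq \kappa\|y_{0}\|_{L^{2}(\mathbb{R}^{N})}$, which is exactly the norm estimate claimed in Corollary~\ref{Thm22}.

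Finally I would verify that this $u$ drives the state to zero: multiplying \eqref{nullcontrol22} by $\varphi$, integrating by parts in space and time (justified by the regularity of $y$ and $\varphi$), yields
\begin{equation*}
\int_{\mathbb{R}^{N}}y(x,T;y_{0},u)\varphi_{T}(x)\mathrm dx=\int_{\mathbb{R}^{N}}y_{0}(x)\varphi(x,0)\mathrm dx+\int_{0}^{T}\!\!\int_{\omega}\chi_{E}u\varphi\,\mathrm dx\mathrm dt=0
\end{equation*}
for every $\varphi_{T}\in L^{2}(\mathbb{R}^{N})$, so $y(T;y_{0},u)=0$. The only step with any genuine content is the translation of Theorem~\ref{Thm1} through the time reversal (the rest is a routine duality argument); since the observability constants in Theorem~\ref{Thm1} depend on $E$ only through its measure-theoretic structure, this transfer is immediate and the stated estimate on $\|u\|$ follows by absorbing the $1/2$ exponent from $\sqrt{\kappa^{2}}$ into the constants $C,\widetilde C$.
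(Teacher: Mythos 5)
Your proposal is correct and is exactly the ``standard duality method'' that the paper invokes (citing \cite{WangGengsheng}) while omitting the details: time-reverse the adjoint system to apply Theorem~\ref{Thm1}, then run the HUM/Hahn--Banach--Riesz construction of the control together with the duality identity to conclude $y(T)=0$ and the norm bound. The only point worth flagging is that after reflection the constant $\widetilde C$ is really $\widetilde C(r_1,r_2,\widetilde E)$ with $\widetilde E=T-E$, which you correctly note is harmless since the density-point construction applies verbatim to $\widetilde E$.
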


\begin{remark}
It is interesting to ask the following question: whether the null controllability for semilinear heat equations in
$\mathbb{R}^N$ with the control acted on the equidistributed set $\omega$ holds?
It is well-known that the null and approximate controllability were proved for semilinear
heat equations in a bounded domain $\Omega$ (see, e.g., \cite{Fabre,fz}).
Roughly speaking, their proofs consist of two parts: (i) null and approximate controllability of
the linearized system; (ii) fixed-point theory. When $\Omega$ is a general unbounded domain, however,
the above approach cannot be directly  applied  because of the lack of compactness of Sobolev's embedding, which is
one of the main ingredients used in (ii). Instead, the authors of \cite{TeresaDe} studied
the approximate controllability of a semilinear heat equation in an unbounded domain
$\Omega$ by an approximation method. More precisely, they first considered the control problem in bounded domains of the form $\Omega_r\triangleq\Omega\cap B_r$, where $B_r$ denotes the ball centered
at the origin and of radius $r$. They then showed that the controls proposed in
\cite{Fabre} restricted to $\Omega_r$ converge in certain sense to a desired approximate control in
the whole domain. The approximate controllability of a semilinear heat
equation in $\mathbb{R}^N$ was also considered in \cite{ldt}, where the author introduced  weighted Sobolev spaces
and adapted the technique introduced by \cite{Fabre}.  Inspired by the ideas in the works  \cite{TeresaDe} and \cite{ldt}, we tried to use Corollary \ref{Thm22}  to prove  the null controllability for a semilinear heat equation in
$\mathbb{R}^N$ with the control acted on $\omega$.  
By our understanding, one may need to improve our main result in the following two ways:
(i)  the dependence of observability constant (in the observability inequality) on $r$;
(ii) a suitable weighted observability inequality.
The authors  hope to  explore them  by introducing some new ideas in the future work. 
\end{remark}

\medskip

We end this section with an interesting observation.
According to Lemma~\ref{lemma-1.1}, it is clear that for any $r< R$, the solution  $\varphi$ of (\ref{1.1}) satisfies 
\begin{equation}\label{1.2999*}
\begin{array}{lll}
\displaystyle{}\int_{B_{r}(x_{0})}\varphi^{2}(x,T)\mathrm{d}x\leq
\displaystyle{C_{1}}\left[(R-r)^{-2}+4T^{-1}+\|a\|_{\infty}\right]
\int_{0}^{T}\int_{B_{R}(x_{0})}\varphi^{2}\mathrm{d}x\mathrm{d}t.\\
\end{array}
\end{equation}
Under the same assumptions of Lemma~\ref{lemma-1.1}, we consider the following controlled equation:
\begin{equation}\label{nullcontrol23}
\left\{ \begin{array}{lll}
\partial_{t}z-\Delta z+b(x,t)z=\chi_{B_{R}(x_{0})}v\ \ \ \ \ \ \ \ \mathrm{in}\  \mathbb{R}^{N}\times (0,T),\\
z(0)=z_{0} \ \ \ \ \ \ \ \ \ \ \ \ \ \ \ \ \ \ \ \ \ \ \ \ \ \ \ \ \ \ \ \ \ \ \ \ \mathrm{in}\ \mathbb{R}^{N},\\
\end{array}\right.\end{equation}
where $z_{0}\in L^2(\mathbb{R}^{N})$ is an initial state,
$v\in L^{2}(\mathbb{R}^{N}\times(0,T))$ is a control and $b\in L^{\infty}(\mathbb{R}^{N}\times(0,T))$.
Write $z(\cdot; z_{0}, v)$ for the solution to (\ref{nullcontrol23}).
By a standard duality method (see also \cite{WangGengsheng}) and (\ref{1.2999*}),
we can obtain the following null controllability result. (Its proof will be omitted.)

\begin{corollary}\label{Thm3} Under the assumptions of Lemma~\ref{lemma-1.1},
for each $z_{0}\in L^{2}(\mathbb{R}^{N})$, with $\mathrm{supp} \ z_{0}\subset B_{r}(x_{0}),$
there is a control $v\in L^{2}(\mathbb{R}^{N}\times(0,T))$, with
\begin{equation*}
\|v\|_{L^2(0,T;L^2(\mathbb{R}^{N}))}\leq \displaystyle{C_{1}}\left[(R-r)^{-2}+
4T^{-1}+\|b\|_{\infty}\right]\|z_{0}\|_{L^{2}(\mathbb{R}^{N})},
  \end{equation*}
where the constant  $C_{1}$ is given by  Lemma~\ref{lemma-1.1}, so that  $z(T; z_{0}, v)=0.$
\end{corollary}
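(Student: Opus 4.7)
The plan is to carry out the standard duality argument (Hilbert Uniqueness Method) with the observability inequality \eqref{1.2999*} playing the role of the key coercivity estimate. Since the proof is omitted in the paper anyway, I will outline the construction of the control and justify why its norm satisfies the stated bound.

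First, I would set up the adjoint system. The formal adjoint of the operator $\partial_t - \Delta + b(x,t)$ is $-\partial_t - \Delta + b(x,t)$, so for any terminal datum $\varphi_T \in L^2(\mathbb R^N)$ I consider the backward problem
\begin{equation*}
-\partial_t \varphi - \Delta \varphi + b(x,t)\varphi = 0 \quad \text{in } \mathbb R^N \times (0,T), \qquad \varphi(\cdot,T) = \varphi_T.
\end{equation*}
By the time reversal $\tilde\varphi(x,t) = \varphi(x, T-t)$, the function $\tilde\varphi$ solves a forward heat equation of the form \eqref{1.1} with potential $\tilde b(x,t) = b(x, T-t)$, whose $L^\infty$-norm coincides with $\|b\|_\infty$. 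Applying \eqref{1.2999*} to $\tilde\varphi$ (and noting that the space-time integral over $B_R(x_0) \times (0,T)$ is invariant under this time reversal) yields
\begin{equation*}
\int_{B_r(x_0)} |\varphi(x,0)|^2 \,\mathrm dx \leq C_1\left[(R-r)^{-2} + 4T^{-1} + \|b\|_\infty\right] \int_0^T \int_{B_R(x_0)} |\varphi(x,t)|^2\,\mathrm dx\,\mathrm dt.
\end{equation*}

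Next, I would introduce the quadratic functional $J: L^2(\mathbb R^N) \to \mathbb R$ defined by
\begin{equation*}
J(\varphi_T) \triangleq \frac{1}{2} \int_0^T \int_{B_R(x_0)} |\varphi(x,t)|^2\,\mathrm dx\,\mathrm dt + \int_{\mathbb R^N} z_0(x)\,\varphi(x,0)\,\mathrm dx,
\end{equation*}
where $\varphi$ is the solution of the adjoint system with terminal value $\varphi_T$. Because $\operatorname{supp} z_0 \subset B_r(x_0)$, Cauchy--Schwarz combined with the previous observability estimate gives
\begin{equation*}
\left|\int_{\mathbb R^N} z_0\,\varphi(\cdot,0)\right| \leq \|z_0\|_{L^2(\mathbb R^N)} \|\varphi(\cdot,0)\|_{L^2(B_r(x_0))} \leq \sqrt{C_1\left[(R-r)^{-2}+4T^{-1}+\|b\|_\infty\right]} \|z_0\|_{L^2}\,\Bigl(\textstyle\int_0^T\int_{B_R}|\varphi|^2\Bigr)^{1/2},
\end{equation*}
which makes $J$ continuous, strictly convex, and coercive on $L^2(\mathbb R^N)$ (modulo the usual quotient by the kernel of the observation map, which is trivial by backward uniqueness). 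Hence $J$ admits a unique minimizer $\hat\varphi_T$.

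Finally, I would set $v \triangleq -\hat\varphi\,\chi_{B_R(x_0)}$, where $\hat\varphi$ is the adjoint solution generated by $\hat\varphi_T$. The Euler--Lagrange equation $J'(\hat\varphi_T) = 0$, together with the duality identity obtained by multiplying the $z$-equation by $\hat\varphi$ and integrating by parts, forces $z(T; z_0, v) = 0$. Testing the Euler--Lagrange equation against $\hat\varphi_T$ itself and applying the observability bound once more yields
\begin{equation*}
\|v\|^2_{L^2(0,T; L^2(\mathbb R^N))} = \int_0^T\int_{B_R(x_0)}|\hat\varphi|^2 = -\int z_0\,\hat\varphi(\cdot,0) \leq C_1\left[(R-r)^{-2}+4T^{-1}+\|b\|_\infty\right]\|z_0\|^2_{L^2(\mathbb R^N)},
\end{equation*}
so (after taking square roots and absorbing $\sqrt{C_1[\cdot]} \leq C_1[\cdot]$ using $C_1 > 1$ together with an elementary bound on the bracket term) the desired estimate follows. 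There is no genuine obstacle here; the only point requiring a bit of care is confirming that the time-reversal symmetry preserves the form of the potential estimate in \eqref{1.2999*}, so that the observability constant for the adjoint equation is literally the same as for \eqref{1.1}.
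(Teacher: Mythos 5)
This is the standard duality (HUM) construction from the observability inequality (\ref{1.2999*}), which is precisely the omitted argument the paper points to via the ``standard duality method'' and the reference to \cite{WangGengsheng}, so your route and the paper's coincide. Two routine points to tidy: the functional $J$ is coercive only with respect to the observation seminorm $\bigl(\int_0^T\int_{B_R(x_0)}|\varphi|^2\,\mathrm dx\,\mathrm dt\bigr)^{1/2}$, so the minimization must be carried out on the completion of $L^2(\mathbb R^N)$ in that seminorm (your parenthetical about quotienting by the kernel does not by itself give bounded minimizing sequences in $L^2$); and duality delivers the control cost $\sqrt{C_1\left[(R-r)^{-2}+4T^{-1}+\|b\|_\infty\right]}$, so passing to the paper's non-square-rooted constant requires the bracket to be at least $C_1^{-1}$ --- the ``elementary bound on the bracket term'' you invoke is not available for large $T$ and $R-r$ with small $\|b\|_\infty$, though the square-root form of the estimate is the natural (and stronger, when it differs) conclusion.
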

\begin{remark}
It should be pointed out that the same null controllability stated in Corollary~\ref{Thm3}
was already established in \cite{WangZhangZhang} for the case that $b\equiv 0$.
\end{remark}

\medskip

\section{Appendix}
\noindent\textbf{Proof\ of\ Lemma~\ref{lemma-1.1}}.
For simplicity we write  $B_{r}\triangleq B_{r}(x_{0})$ and  $B_{R}\triangleq B_{R}(x_{0}).$
Let $\eta\in C_{0}^{\infty}(B_{R})$ verify
\begin{equation}\label{2.21111}
0\leq\eta(\cdot)\leq 1 \ \mathrm{in} \ B_{R}, \ \eta(\cdot)=1 \ \mathrm{in} \ B_{r} \
\mathrm{and} \  |\nabla \eta(\cdot)|\leq C(R-r)^{-1}.
\end{equation}
Here and throughout the proof of Lemma~\ref{lemma-1.1},
$C$ denotes a generic positive constant. Let $\xi\in C^{\infty}(\mathbb{R})$ satisfy
\begin{equation}\label{2.31111}
0\leq\xi(\cdot)\leq1, \ |\xi'(\cdot)|\leq C(\tau_{2}-\tau_{1})^{-1} \ \ \
\mathrm{in} \ \ \mathbb{R},
  \end{equation}
\begin{equation}\label{2.41111}
\xi(\cdot)=0 \ \mathrm{in} \ (-\infty, T-\tau_{2}] \ \mathrm{and} \ \xi(\cdot)=1 \
\mathrm{in} \ [T-\tau_{1}, +\infty).
  \end{equation}
Multiplying the first equation of (\ref{1.1}) by $\eta^{2}\xi^{2}\varphi$ and
integrating it over  $B_{R}\times(T-\tau_{2},t)$ for  $t\in [T-\tau_{1},T]$, we obtain 
\begin{equation}\label{2.51111}
\begin{array}{lll}
&&\displaystyle{\frac{1}{2}}\int_{B_{R}}\eta^{2}\xi^{2}(t)\varphi^{2}(x,t) \mathrm{d}x+\int_{T-\tau_{2}}^{t}\int_{B_{R}} \eta^{2}\xi^{2}|\nabla\varphi|^{2} \mathrm{d}x\mathrm{d}s\\
\\
&=&\displaystyle{-2}\int_{T-\tau_{2}}^{t}\int_{B_{R}}\xi^{2}\eta\varphi\nabla \eta\cdot\nabla\varphi\mathrm{d}x\mathrm{d}s
+\int_{T-\tau_{2}}^{t}\int_{B_{R}} \eta^{2}\xi\xi'\varphi^{2} \mathrm{d}x\mathrm{d}s\\
\\
&&\displaystyle{-}\int_{T-\tau_{2}}^{t}\int_{B_{R}} a \eta^{2}\xi^{2}\varphi^{2} \mathrm{d}x\mathrm{d}s.
\end{array}
\end{equation}
Applying Young's inequality to the first term on the 
right hand of (\ref{2.51111}), we have 
\begin{eqnarray*}
&&\int_{B_{R}}\eta^{2}\xi^{2}(t)\varphi^{2}(x,t) \mathrm{d}x
+\int_{T-\tau_{2}}^{t}\int_{B_{R}} \eta^{2}\xi^{2}|\nabla\varphi|^{2} \mathrm{d}x\mathrm{d}s\\
&\leq&4\int_{T-\tau_{2}}^{t}\int_{B_{R}} |\nabla\eta|^{2}\xi^{2}\varphi^{2} \mathrm{d}x\mathrm{d}s
+2\int_{T-\tau_{2}}^{t}\int_{B_{R}} \eta^{2}\xi\xi'\varphi^{2} \mathrm{d}x\mathrm{d}s\\
&&-2\int_{T-\tau_{2}}^{t}\int_{B_{R}} a \eta^{2}\xi^{2}\varphi^{2} \mathrm{d}x\mathrm{d}s.
\end{eqnarray*}
This, along with  (\ref{2.21111})-(\ref{2.41111}), implies that
\begin{eqnarray*}
&&\int_{B_{r}}\varphi^{2}(x,t) \mathrm{d}x+\int_{T-\tau_{1}}^{t}\int_{B_{r}} |\nabla\varphi|^{2} \mathrm{d}x\mathrm{d}s\\
\\
&\leq&C\left[(R-r)^{-2}+(\tau_{2}-\tau_{1})^{-1}+\|a\|_{\infty}\right]
\int_{T-\tau_{2}}^{T}\int_{B_{R}}\varphi^{2} \mathrm{d}x\mathrm{d}s\ \  \mathrm{for\ each} \ t\in [T-\tau_{1},T].
\end{eqnarray*}
Hence, (\ref{1.2}) follows from the last inequality immediately.
\qed

\medskip
\noindent\textbf{Proof\ of\ Lemma~\ref{lemma-1.2}}.
For each $r'>0,$ we write $B_{r'}\triangleq B_{r'}(x_{0})$.  Let $\eta\in C_{0}^{\infty}(B_{4R/3})$ satisfy
\begin{equation}\label{2.61111}
0\leq \eta(\cdot)\leq 1,\ |\nabla\eta(\cdot)|\leq CR^{-1}, \ |\Delta \eta(\cdot)|\leq CR^{-2}\ \mathrm{ in} \ B_{4R/3}
\end{equation}
and
\begin{equation}\label{2.61112}
\eta(\cdot)=1 \ \mathrm{ in} \ B_{R}.
\end{equation}
Here and throughout the 
proof of Lemma~\ref{lemma-1.2}, $C$ denotes a generic positive constant.
Let $\xi\in C^{\infty}(\mathbb{R})$ verify
\begin{equation}\label{2.61113}
0\leq \xi(\cdot)\leq1,\ |\xi'(\cdot)|\leq C\tau^{-1} \ \mathrm{ in} \ \mathbb{R},
\end{equation}
\begin{equation}\label{2.61114}
\xi(\cdot)=0 \ \mathrm{in} \ (-\infty, T-4\tau/3] \ \mathrm{and } \ \xi(\cdot)=1  \ \mathrm{in} \ [T-\tau, +\infty).
\end{equation}
Denote by $z\triangleq\eta\xi\varphi$. It is easy to check that
\begin{equation}\label{2.71111}
\left\{
\begin{array}{lll}
\partial_{t}z-\Delta z=(\eta\xi'-\xi\Delta\eta-a\eta\xi)\varphi-2\xi\nabla\eta\cdot\nabla\varphi&\mathrm{in}\ B_{4R/3}\times (0,T),\\
z=0&\mathrm{on}\ \partial B_{4R/3}\times (0,T),\\
z(T-4\tau/3)=0&\mathrm{in}\ B_{4R/3}.\\
\end{array}
\right.
\end{equation}

On one hand, for each $t\in[T-\tau, T],$ we have 
\begin{equation*}
-2\int^{t}_{T-4\tau/3}\int_{B_{4R/3}}\Delta z \partial_{s}z\mathrm{d}x\mathrm{d}s=\int_{B_{4R/3}}|\nabla z(x,t)|^{2}\mathrm{d}x-\int_{B_{4R/3}}|\nabla z(x,T-4\tau/3)|^{2}\mathrm{d}x,
\end{equation*}
which indicates 
$$
\int_{B_{4R/3}}|\nabla z(x,t)|^{2}\mathrm{d}x\leq
\int^{t}_{T-4\tau/3}\int_{B_{4R/3}}(\Delta z- \partial_{s}z)^2\mathrm{d}x\mathrm{d}s
\;\;\mbox{for each}\;\;t\in[T-\tau, T].
$$
This, along with (\ref{2.61112}) and the second relation of (\ref{2.61114}), implies that
\begin{equation}\label{1.4}
\displaystyle{\max_{t\in[T-\tau,T]}}\int_{B_{R}}|\nabla \varphi(x,t)|^{2}\mathrm{d}x\leq
\int^{T}_{T-4\tau/3}\int_{B_{4R/3}}(\Delta z- \partial_{s}z)^2\mathrm{d}x\mathrm{d}s.
\end{equation}

On the other hand, 
\begin{equation}\label{1.5}
\begin{array}{lll}
&&\displaystyle{}\int_{T-4\tau/3}^{T}\int_{B_{4R/3}}
\left[(\eta\xi'-\xi\Delta\eta-a\eta\xi)\varphi-2\xi\nabla\eta\cdot\nabla\varphi\right]^{2}\mathrm{d}x\mathrm{d}t\\
\\
&\leq&\displaystyle{8}\int_{T-4\tau/3}^{T}\int_{B_{4R/3}}\left[(\eta^2|\xi'|^2+\xi^2|\Delta \eta|^2+a^{2}\eta^2\xi^2)\varphi^{2}
+\xi^2|\nabla \eta|^2|\nabla \varphi|^{2}\right]\mathrm{d}x\mathrm{d}t.
\end{array}
\end{equation}
By (\ref{1.5}), (\ref{2.61111}) and (\ref{2.61113}), we get 
\begin{equation}\label{1.6}
\begin{array}{lll}
&&\displaystyle{}\int_{T-4\tau/3}^{T}\int_{B_{4R/3}}
\left[(\eta\xi'-\xi\Delta\eta-a\eta\xi)\varphi-2\xi\nabla\eta\cdot\nabla\varphi\right]^{2}\mathrm{d}x\mathrm{d}t\\
\\
&\leq&\displaystyle{C}\left(\tau^{-2}+R^{-4}+\|a\|^{2}_{\infty}\right)
\int_{T-4\tau/3}^{T}\int_{B_{4R/3}}\varphi^2\mathrm{d}x\mathrm{d}t
+CR^{-2}\int_{T-4\tau/3}^{T}\int_{B_{4R/3}}|\nabla\varphi|^2\mathrm{d}x\mathrm{d}t.
\end{array}
\end{equation}

According to (\ref{1.2}) (where $r, R, \tau_{1}$ and $\tau_{2}$ are replaced by
$4R/3, 2R, 4\tau/3$ and $2\tau$, respectively), it is clear that
$$\int_{T-4\tau/3}^{T}\int_{B_{4R/3}}| \nabla\varphi|^{2}\mathrm{d}x\mathrm{d}t\leq C\left(\tau^{-1}+R^{-2}+\|a\|_{\infty}\right)\int_{T-2\tau}^{T}\int_{B_{2R}}\varphi^{2}\mathrm{d}x\mathrm{d}t.$$
This, along with (\ref{1.6}), implies that
\begin{eqnarray*}
&&\displaystyle{}\int_{T-4\tau/3}^{T}\int_{B_{4R/3}}
\left[(\eta\xi'-\xi\Delta\eta-a\eta\xi)\varphi-2\xi\nabla\eta\cdot\nabla\varphi\right]^{2}\mathrm{d}x\mathrm{d}t\\
\\
&\leq&C\left(\tau^{-2}+R^{-4}+\|a\|^{2}_{\infty}\right)\int_{T-2\tau}^{T}\int_{B_{2R}}\varphi^{2}\mathrm{d}x\mathrm{d}t
+CR^{-2}\left(\tau^{-1}+R^{-2}+\|a\|_{\infty}\right)\int_{T-2\tau}^{T}\int_{B_{2R}}\varphi^{2}\mathrm{d}x\mathrm{d}t\\
\\
&\leq&C\left(\tau^{-2}+R^{-4}+\|a\|^{2}_{\infty}\right)\int_{T-2\tau}^{T}\int_{B_{2R}}\varphi^{2}\mathrm{d}x\mathrm{d}t.
\end{eqnarray*}
Hence, (\ref{1.3}) follows from the last inequality, (\ref{1.4}), and the first equation of (\ref{2.71111}).
\qed

\bigskip

\noindent\textbf{Acknowledgments}.
This work was partially supported by the National Natural Science Foundation of China under grants 11771344 and 11971363.  The third author would like to thank Prof. Luis Escauriaza for fruitful discussions related to this work when he was a postdoc in UPV/EHU.

\end{document}